\documentclass{article}
\usepackage{amssymb,amsmath,bm,paralist,graphics,epsfig,graphicx,epstopdf,amsthm,geometry,color,yhmath}
\geometry{top = 3.8cm, bottom = 3.2cm, left = 2.5cm, right = 2.6cm}
\usepackage{cite}
\usepackage[colorlinks,urlcolor=blue]{hyperref}

\theoremstyle{plain}
\newtheorem{theorem}{Theorem}[section]

\newtheorem{remark}[theorem]{Remark}

\newcommand{\norm}[1]{\left\Vert#1\right\Vert}
\newcommand{\brac}[1]{\left(#1\right)}
\newcommand{\abs}[1]{\left\vert#1\right\vert}

\newcommand{\ie}{{\it{i.e.}}}
\newcommand{\eg}{{\it{e.g.\ }}}

\newcommand{\diag}{\mbox{diag}}

\usepackage{lineno}

\usepackage{enumitem}

\begin{document}

\baselineskip=1pc  

\begin{center}
{\bf \large Validity of relaxation models arising from numerical schemes for hyperbolic-parabolic systems}
\end{center}

\vspace{.2in}

\centerline{
Zhiting Ma \footnote{Beijing Institute of Mathematical Sciences and Applications, Beijing 101408, China. E-mail: mazt@bimsa.cn.}
\qquad
Weifeng Zhao
\footnote[2]{Corresponding author. Department of Applied Mathematics, University of Science and Technology Beijing, Beijing 100083, China. wfzhao@ustb.edu.cn}
}

\vspace{.4in}

\centerline{\bf Abstract}

\vspace{.2in}

This work is concerned with relaxation models arising from numerical schemes for hyperbolic-parabolic systems. Such models are a hyperbolic system with both the hyperbolic part and the stiff source term involving a small positive parameter, and thus are endowed with complicated multiscale properties.
Relaxation models are the basis of constructing corresponding numerical schemes and a critical issue is the convergence of their solutions to those of the given target systems, the justification of which is still lacking.  In this work, we formulate convergence criteria for general hyperbolic relaxation systems to validate relaxation models in numerical schemes of hyperbolic-parabolic systems. By verifying the convergence criteria, we demonstrate the convergence, and thereby the approximation validity, of five representative relaxation models, providing a solid basis for the effectiveness of the corresponding numerical schemes. 
Moreover, we propose a new relaxation model for the general multi-dimensional hyperbolic-parabolic system. With some mild assumptions on the system, we show that the proposed model satisfies the convergence criteria. We remark that the existing relaxation models are constructed only for a special case of hyperbolic-parabolic system, while our new relaxation model is valid for general systems.

\vfill

\noindent {\bf Keywords}: {Hyperbolic-parabolic system, relaxation model, convergence criteria, relaxation scheme.}

\vspace{.2in}

\noindent {\bf AMS Subject Classification}: {35L60, 35B40, 65M12}

\vspace{.2in}

\section{Introduction}

Over the past decades, relaxation models have emerged as a powerful tool for mathematical modeling and numerical simulations \cite{Jin2022}.
These models are usually proposed to approximate a given system of partial differential equations (PDEs) with a larger, but often simpler and more tractable system by introducing auxiliary variables and stiff source terms with small relaxation parameters. 
Typical examples include the Jin-Xin relaxation system for hyperbolic conservation laws \cite{jin1995relaxation,jin1998Diffusion,jin1998jnadiffusive} and kinetic approximations for fluid dynamics equations \cite{arun2007genuinely,arun2013,guo2013lattice}. 
A crucial issue for relaxation models is the convergence (or compatibility) to the given original system in the relaxation limit, which ensures that the relaxation approximation faithfully captures the dynamics of the underlying system. 
On this issue, fundamental results have been established 
with the aid of energy estimates, entropy structures, and compensated compactness techniques; see \eg \cite{CLL1994,yong1999singular,jin1998Diffusion,bouchut2000diffusive,lattanzio2001hyperbolic,LN2002,peng2016parabolic,peng2025convergence}.

In this work, we are concerned with relaxation models arising from numerical schemes for hyperbolic-parabolic systems of the form
\begin{equation}\label{equ:hp-equ}
\begin{aligned}
    \partial_t u + \sum^d_{j=1} a_j(u) \partial_{x_j} u ={}& \sum_{j,k=1}^d \partial_{x_j} \brac{D_{j k}(u) \partial_{x_k} u},
    \end{aligned}
\end{equation}
where $u=u(x, t)$ is the unknown $m$-vector valued function of time $t\geq 0$ and space coordinate $x=(x_1, \ldots, x_d)\in \mathbb{R}^d$, $a_j = a_j(u)$ and $D_{jk} = D_{jk}(u)$ $(j, k=1,\ldots, d)$ are given $m\times m$-matrix valued smooth functions of $u\in \Omega_u$ with state space $\Omega_u$ open and convex. 
Such systems frequently appear in non-equilibrium fluid dynamics, radiative transfer, traffic flow, and semiconductor devices. 
%
To efficiently solve \eqref{equ:hp-equ}, a variety of relaxation schemes have been proposed in the literature \cite{JPT1998,JP2000,NP2000,BR2013,BPR2013,Boscarino2014HighSisc,chen2023relaxation,Aregba2001diffusion,aregba2004explicit,lund2012siam}. The construction of these schemes consists of two steps: first a relaxation model is formulated to approximate \eqref{equ:hp-equ} and then a suitable scheme is carefully designed to solve it. In this methodology, the validity of the relaxation model approximating the given system is crucial, otherwise the resulting numerical scheme is invalid.

Here we briefly introduce some relaxation models for systems of the form \eqref{equ:hp-equ}. The first model is the hyperbolic system with diffusive relaxation, which was considered as an approximation of the convection-diffusion equation (CDE) \cite{JPT1998,JP2000,NP2000,BR2013,BPR2013,Boscarino2014HighSisc}. %
Based on this relaxation model and implicit-explicit (IMEX) Runge-Kutta schemes, general approaches were presented in \cite{BR2013,BPR2013,Boscarino2014HighSisc} to overcome the classical parabolic time-step restriction $\Delta t = O( \Delta x^2)$ ($\Delta t$ is the time step and $\Delta x$ is the mesh size) for CDEs.
By relaxing both the convective fluxes and the diffusive fluxes, a relaxation model was recently introduced in \cite{chen2023relaxation} for the entropy dissipative system of viscous conservation laws. With this model, relaxation schemes in the finite volume framework are also developed to avoid the parabolic restriction \cite{chen2023relaxation}. 
In \cite{cavalli2012discontinuous}, a semilinear hyperbolic system is constructed to approximate nonlinear diffusion equations by extending the idea of Jin-Xin relaxation model for hyperbolic conservation laws. 
In this system, an additional auxiliary variable is introduced for the common relaxation model so that the convective part of the final system is linear. Thanks to this advantage,  the numerical schemes obtained in \cite{cavalli2012discontinuous,Ca2013} do not require solving implicit nonlinear problems. 

Note that the relaxation models mentioned above are direct relaxation for the convective flux or the diffusive flux of the given system. 
Another popular class of relaxation systems are the kinetic models, which consist of linear advection of distribution functions and stiff collision terms that derive the distribution functions tend to the equilibrium in the relaxation limit.  As for systems of the form \eqref{equ:hp-equ}, we mention the lattice Boltzmann equation (LBE) for the CDEs \cite{ginzburg2007lattice,ginzburg2012truncation,shi2009lattice,chai2013lattice,Yoshida2010Multiple,Huang2015boundary,zhang2019lattice} and the diffusive kinetic model for nonlinear hyperbolic-parabolic systems \cite{Aregba2001diffusion,aregba2004explicit}.  In particular, the LBE is the basis of the popular lattice Boltzmann method (LBM), which is a powerful solver for fluid dynamics equations and the CDEs \cite{benzi1992lattice,chen1998lattice,aidun2010lattice}.

Though the relaxation-based numerical methods exhibit high computational efficiency compared to the traditional ones in terms of stability and simplicity, justification of the approximation validity for the underlying relaxation models is far from complete. 
%
On this point, we remark that for the relaxation model in \cite{JPT1998,JP2000,NP2000,BR2013,BPR2013,Boscarino2014HighSisc}, convergence of its solution to that of a nonlinear CDE is proved with initial data prescribed around a traveling wave solution \cite{jin1998Diffusion}. For the scalar case of the kinetic approximation in \cite{Aregba2001diffusion,aregba2004explicit}, convergence proof is established with kinetic entropy inequalities in \cite{BGN2001}. 
In \cite{LN2002}, a rigorous result of convergence toward the formal limit of a class of BGK approximations of parabolic systems in one space dimension is presented.
The diffusive relaxation limit of the Jin-Xin system toward viscous conservation laws in the multi-dimensional setting is studied in \cite{CS2023diffusive}.
Nevertheless, for most of the relaxation models arising from numerical schemes for \eqref{equ:2-1-pde}, their validity is still to be clarified.

Actually, the general form of the relaxation models in developing the above numerical schemes can be written as
\begin{equation}\label{equ:2-1-pde}
	\begin{aligned}
		\partial_t U + \frac{1}{\varepsilon} \sum^d_{j=1} A_j(U;\varepsilon)\partial_{x_j} U &= \frac{1}{\varepsilon^2}Q(U;\varepsilon),
	\end{aligned}
\end{equation}
which is a set of first-order PDEs with a small parameter $\varepsilon >0 $.
Here $U =(u, ~w)^T$ is the $n$-vector function with $u\in \mathbb{R}^{n-r}(m=n-r)$, $ w\in \mathbb{R}^{r}$ of $(x, t)$, taking values in the state space $\Omega$ (an open subset  of $R^n $); $A_j(U;\varepsilon)$ and $Q(U;\varepsilon)$ are the respective $n\times n$-matrix and $n$-vector smooth functions of $(U;\varepsilon)\in \Omega\times (0, 1]$.
Additionally, $w$ denotes the vector of relaxed variables, and $Q(U;\varepsilon)$ encodes the stiff source terms driving $U$ toward equilibrium. 
For the case of $A_j(U;\varepsilon) = \hat A_j(\varepsilon U) + \varepsilon \bar{A}_j(U)$ and $\varepsilon$-independent source term $Q(U;\varepsilon) = Q(U)$, a framework for analyzing singular limits is presented in \cite{lattanzio2001hyperbolic} based on structural stability conditions for hyperbolic relaxation systems \cite{yong1999singular}. 
This result is further improved in \cite{peng2016parabolic} for a more general source term 
$Q(U;\varepsilon)$ depending on $\varepsilon$ and a more general compatibility conditions.
Using these as stepping-stones, convergence of the complete problem \eqref{equ:2-1-pde} has recently been resolved in \cite{peng2025convergence} with a unified energy estimate. 
Thus we can employ these results to validate the aforementioned relaxation models.

The main contributions of this work are two manifolds. First, we justify the convergence of five representative relaxation models mentioned above to the target hyperbolic-parabolic system by verifying the convergence criteria in \cite{yong1999singular,lattanzio2001hyperbolic,peng2025convergence}. 
These include three relaxation models via direct relaxation for the convective or diffusive flux from \cite{JPT1998,JP2000,NP2000,BR2013,BPR2013,Boscarino2014HighSisc,chen2023relaxation,cavalli2012discontinuous}, the LBE for CDEs  \cite{ginzburg2007lattice,ginzburg2012truncation,shi2009lattice,chai2013lattice,Yoshida2010Multiple,Huang2015boundary,zhang2019lattice} and the kinetic approximation for nonlinear hyperbolic-parabolic systems proposed in \cite{aregba2004explicit}. 
Since the relaxation models are the basis of the corresponding numerical schemes, the validity of these models is of fundamental importance, particularly for their convergence and consistency analysis. 
On the other hand, we propose a new relaxation model for the general multi-dimensional hyperbolic-parabolic system of the form \eqref{equ:hp-equ}. By some mild assumptions on the system, we show that the proposed relaxation model satisfies the convergence criteria in \cite{yong1999singular,lattanzio2001hyperbolic,peng2025convergence} and thus its validity as an approximation of \eqref{equ:hp-equ} is guaranteed. Notice that the existing five relaxation models we analyze are constructed only for special case of \eqref{equ:hp-equ}, while our new relaxation model is the first one valid for the general system. We believe that the new model is promising for broader applications in multiscale modeling and developing efficient numerical schemes for complex dissipative systems.

The rest of the paper is organized as follows. Section \ref{sec2} introduces the convergence criteria in \cite{yong1999singular,lattanzio2001hyperbolic,peng2025convergence}.  The verification of these criteria for five relaxation models is presented in Section \ref{sec3}. 
In Section \ref{sec4} we propose a new relaxation model and justify its validity. Finally some conclusions and remarks are given in Section \ref{sec5}.

\section{Convergence criteria}
\label{sec2}

In this section, we give the convergence criteria proposed in \cite{yong1999singular,lattanzio2001hyperbolic,peng2025convergence}, which guarantee that the solution of the hyperbolic relaxation system \eqref{equ:2-1-pde} converges to that of the limit equation, \ie, the hyperbolic-parabolic system \eqref{4eq:u0-equ} below, in the relaxation limit $\varepsilon \rightarrow 0$. Denote by $\Omega$ and $\Omega_u$ the state space of solutions to \eqref{equ:2-1-pde} and \eqref{4eq:u0-equ}, respectively.
The criteria consist of five conditions on the relaxation system \eqref{equ:2-1-pde} \cite{yong1999singular,lattanzio2001hyperbolic,peng2025convergence}:
\begin{enumerate}[label=(\roman*)]
    \item \label{cond:2-ssc-1} $Q(U;\varepsilon)$ can be written as $Q(U;\varepsilon) = (0, ~q(U;\varepsilon))^T$
with $q: \Omega\times (0, 1] \rightarrow \mathbb{R}^r $ a smooth function. 
    \item \label{cond:2-ssc-2} $q(u, w;0) = 0$  if and only if $ w=0$; 
    and $ \partial_w q(u, 0;0)$ is invertible for all $u\in \Omega_u$. 
    \item  \label{cond:2-ssc-3}	 The system is symmetrizable hyperbolic, \ie, there exists a symmetric positive definite matrix $A_0(U;\varepsilon)$ (called symmetrizer) such that
    \begin{equation}\nonumber		A_0(U;\varepsilon)A_j(U;\varepsilon)=A_j(U;\varepsilon)^TA_0(U;\varepsilon), \quad  \forall U \in \Omega.
\end{equation}
\item \label{cond:2-ssc-4} There exists a symmetric positive definite $r \times r$ matrix $S(u)$ such that
\begin{equation}\nonumber
A_0(u,0;0)
\partial_U Q(u,0;0)
+(\partial_U Q(u,0;0))^TA_0(u,0;0)
\leq	
		-\begin{pmatrix}
				0 & 0\\
				0 & S(u)
		\end{pmatrix},
        \quad  \forall u \in \Omega_u.
\end{equation}
\item \label{cond:2-ssc-5} $A_j^{11}(u,0;0)=0$ and $ \partial_u A_{j}^{11}(u,0;0) = 0$ for all $u\in \Omega_u$. 
\end{enumerate}	
In the above equations, the superscript $T$ denotes the transpose operator.
Throughout the paper, a vector $U \in \mathbb{R}^n$ is frequently decomposed as $\begin{pmatrix}U^I\\U^{II}\end{pmatrix}$, where \( U^I \in \mathbb{R}^{n-r} \) and \( U^{II} \in \mathbb{R}^r \) denote the subvectors composed of the first $n-r$ and last $r $ components of $V$, respectively. Similarly, an $n \times n$  matrix  $A$ is written as $\begin{pmatrix}A^{11}& A^{12}\\A^{21}&A^{22}\end{pmatrix}$, where each block corresponds to a natural partition of rows and columns into the first $n-r$ and last $r$ components.

Some we give some explanations on the above conditions. Condition \ref{cond:2-ssc-1} gives the usual form of source term for first-order hyperbolic problems with relaxation. Condition \ref{cond:2-ssc-2} is the parabolic structural assumption \cite{lattanzio2001hyperbolic}. Condition \ref{cond:2-ssc-3} means the symmetrizable hyperbolicity of the system of first-order partial differential equations. Condition \ref{cond:2-ssc-4} stands for the partial dissipation property of system \eqref{equ:2-1-pde}. Condition \ref{cond:2-ssc-5} guarantees the compatibility of the relaxation system with the limit hyperbolic-parabolic system. 

It should be noted that Conditions \ref{cond:2-ssc-1}--\ref{cond:2-ssc-4} are the structural stability conditions proposed in \cite{yong1999singular}, which guarantee the convergence of solutions of relaxation systems to those of hyperbolic systems. These conditions have been tacitly respected by many well-known physical models \cite{yong1999singular,yong2001,Yong2008}. When combined with the compatibility condition \ref{cond:2-ssc-5}, they ensure the convergence of solutions of the relaxation models to those of the limit hyperbolic-parabolic systems \cite{peng2025convergence}.

To determine the limit equation of \eqref{equ:2-1-pde}, we expand its solution in terms of $\varepsilon$ as 
\begin{equation}
    u \sim u_0 + \varepsilon u_1 + \varepsilon^2 u_2 + \ldots, \quad w \sim w_0 + \varepsilon w_1 + \varepsilon^2 w_2 + \ldots.
\end{equation}
Substituting this Ansatz into \eqref{equ:2-1-pde} and 
equating the coefficients of different powers of $\varepsilon$, we obtain 
\begin{subequations}
\begin{align}
   & \varepsilon^{-2}: {} w_0 = 0,\\
    &\varepsilon^{-1}: {} w_1 = {} \partial_w q(u_0, 0;0)^{-1}\bigg[\sum_{j=1}^d A_j^{21}(u_0, 0;0)\partial_{x_j} u_0 
    - \partial_\varepsilon q(u_0, 0;0)\bigg],\label{equ:u_0-1-b}\\
   & \label{equ:u_0-1-c} \varepsilon^0 : {}\partial_t u_0 
   +  \sum_{j=1}^d A^{12}_j(u_0, 0;0)\partial_{x_j} w_1 
   + \sum_{j=1}^d w_1 \partial_w A^{11}_{j}(u_0, 0;0) \partial_{x_j} u_0 \\
   & \nonumber \hspace{7mm} + \sum_{j=1}^d \partial_\varepsilon A^{11}_{j}(u_0, 0;0)\partial_{x_j} u_0  =0, 
  \end{align}
\end{subequations}
where Conditions \ref{cond:2-ssc-1}, \ref{cond:2-ssc-2} and \ref{cond:2-ssc-5} have bee used.
Combining \eqref{equ:u_0-1-b} and \eqref{equ:u_0-1-c} gives the limit equation
%
\begin{equation}\label{4eq:u0-equ}
\begin{aligned}
    &\partial_t u_0 - \sum_{j=1}^d A^{12}_j(u_0, 0;0) \partial_u \brac{ \partial_w q(u_0, 0;0)^{-1} \partial_\varepsilon q(u_0, 0;0) }\partial_{x_j} u_0\\
    {}&-
    \partial_w q(u_0, 0;0)^{-1} \partial_\varepsilon q(u_0, 0;0)\sum_{j=1}^d \partial_w A^{11}_{j}(u_0, 0;0) \partial_{x_j} u_0 + \sum_{j=1}^d \partial_\varepsilon A^{11}_{j}(u_0, 0;0)\partial_{x_j} u_0\\
    {}&+ 
    \sum_{j, k=1}^d A^{12}_j(u_0, 0;0) \partial_w q(u_0, 0;0)^{-1}A_k^{21}(u_0, 0;0)\partial_{x_j}\partial_{x_k} u_0 \\
    {}&+ 
    \sum_{j, k=1}^d A^{12}_j(u_0, 0;0)\partial_u \brac{\partial_w q(u_0, 0;0)^{-1}  A_k^{21}(u_0, 0;0)}\partial_{x_j} u_0\partial_{x_k} u_0\\
    {}&+ 
    \sum_{j, k=1}^d \partial_w q(u_0, 0;0)^{-1} A_k^{21}(u_0, 0;0)\partial_{x_k} u_0\partial_w A^{11}_{j}(u_0, 0;0) \partial_{x_j} u_0 = 0.
\end{aligned}    
\end{equation}
Denote by $U^\varepsilon = (u^\varepsilon, w^\varepsilon)$ the solution to system \eqref{equ:2-1-pde}.
Let $s>\frac{d}{2}+1$ be an integer and denote $\norm{\cdot}_s$ as the norm of the Sobolev
space $H^s$.
Under structural stability conditions \ref{cond:2-ssc-1}-\ref{cond:2-ssc-4} and the compatibility condition \ref{cond:2-ssc-5}, the following convergence result can be found in \cite{peng2025convergence}:

\begin{theorem}\label{thm:convergence}
Denote  $\tilde{U}(x, \varepsilon)=(\tilde{U}^{I}(x, \varepsilon),\tilde{U}^{II}(x, \varepsilon))^T$ and $u_0(x, 0)$ as the initial data for the relaxation system \eqref{equ:2-1-pde} and the limit equation \eqref{4eq:u0-equ}, respectively, which are assumed to satisfy the consistency condition
\begin{equation*}
		\norm{\tilde{U}^{I}(\cdot, \varepsilon) - u_0(\cdot,0)}_s = O(\varepsilon).
\end{equation*}
Assume Conditions \ref{cond:2-ssc-1}-\ref{cond:2-ssc-5} hold.
Then there exists a positive constant $T_\star$, independent of $\varepsilon$, such that system \eqref{equ:2-1-pde} with the initial data has a unique solution in $C ( [0, T_\star], H^s )$. 

If we further assume that the limit equation \eqref{4eq:u0-equ} has a unique solution $u_0(x ,t) \in C([0,T_\star],H^{s+1})$ $\bigcap C^1([0,T_\star],H^s)$, then there exist a positive constant $K(T_\star)$ such that 
\begin{equation*}
		\sup _{t \in [0, T_\star ]}\norm{u^\varepsilon(t) - u_0(t)}_s \leq K(T_\star) \varepsilon
\end{equation*}
for sufficiently small $\varepsilon$.
\end{theorem}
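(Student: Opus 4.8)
The plan is to combine two ingredients: first, a uniform-in-$\varepsilon$ local well-posedness result for the relaxation system \eqref{equ:2-1-pde} obtained from energy estimates that exploit the structural stability Conditions \ref{cond:2-ssc-1}--\ref{cond:2-ssc-4}; and second, a matched asymptotic (Chapman--Enskog/Hilbert) expansion together with an error estimate that uses the compatibility Condition \ref{cond:2-ssc-5} to show that $u^\varepsilon - u_0$ is $O(\varepsilon)$ in $H^s$. These two steps correspond exactly to the two assertions of the theorem.

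For the first step I would symmetrize \eqref{equ:2-1-pde} by multiplying with $A_0(U;\varepsilon)$, apply $\partial_x^\alpha$ for $|\alpha|\le s$, pair with $\partial_x^\alpha U$ and integrate in $x$. The singular terms are handled by the block structure: since $A_0A_j$ is symmetric (Condition \ref{cond:2-ssc-3}), the $\varepsilon^{-1}\sum_j A_0A_j\partial_{x_j}$ contribution vanishes after integration by parts up to commutator terms whose coefficients involve only derivatives of $A_0,A_j$ and are hence $O(1)$; the quadratic form $\varepsilon^{-2}\langle A_0\partial_U Q\,\partial_x^\alpha U,\partial_x^\alpha U\rangle$ is made coercive on the $U^{II}$-component near equilibrium by Condition \ref{cond:2-ssc-4}, producing a dissipative term $-c\,\varepsilon^{-2}\|U^{II}\|_s^2$; and the residual $\varepsilon^{-1}$-coupling between $U^{I}$ and $U^{II}$, together with the higher-order remainders in the Taylor expansions of $Q$ and the $A_j$ around equilibrium, is absorbed into this dissipation by Young's inequality. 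This yields an inequality of the schematic form $\frac{d}{dt}\|U\|_s^2 + c\,\varepsilon^{-2}\|U^{II}\|_s^2 \lesssim \big(1 + \|U\|_s\big)\|U\|_s^2$, from which a standard continuation argument gives a common existence time $T_\star$ and uniform-in-$\varepsilon$ bounds in $C([0,T_\star],H^s)$.

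For the second step I would build the approximate solution $U^{\mathrm{app}}=(u^{\mathrm{app}},w^{\mathrm{app}})$ from the expansion already derived in Section \ref{sec2}: $w_0=0$, $w_1$ as in \eqref{equ:u_0-1-b}, $u_0$ solving the limit equation \eqref{4eq:u0-equ}, and, if the residual requires, the next correctors $u_1$ and $w_2$. Substituting $U^{\mathrm{app}}$ into \eqref{equ:2-1-pde} leaves a residual $R^\varepsilon$ which is $O(\varepsilon)$ in $H^s$ because all lower powers of $\varepsilon$ have been matched; the assumed extra regularity $u_0\in C([0,T_\star],H^{s+1})\cap C^1([0,T_\star],H^s)$ is precisely what bounds $R^\varepsilon$ and its $x$-derivatives. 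The consistency hypothesis $\|\tilde U^{I}(\cdot,\varepsilon)-u_0(\cdot,0)\|_s=O(\varepsilon)$, combined with the layer-free choice $w^{\mathrm{app}}(\cdot,0)=\tilde U^{II}(\cdot,\varepsilon)$, ensures the initial error $\|U^\varepsilon(0)-U^{\mathrm{app}}(0)\|_s=O(\varepsilon)$. One then writes the error equation for $E^\varepsilon=U^\varepsilon-U^{\mathrm{app}}$, which is again a relaxation system of type \eqref{equ:2-1-pde} with smooth $\varepsilon$-dependent coefficients frozen along the bounded approximate solution and a forcing $R^\varepsilon$, and repeats the energy estimate of the first step. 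Here Condition \ref{cond:2-ssc-5} is decisive: $A_j^{11}(u_0,0;0)=0$ and $\partial_uA_j^{11}(u_0,0;0)=0$ kill the would-be $O(\varepsilon^{-1})$ flux terms acting on the undamped component $E^{I}$, so every surviving singular term hits only $E^{II}$ and is controlled by the $\varepsilon^{-2}$-dissipation of Condition \ref{cond:2-ssc-4}. A Gr\"onwall argument then gives $\sup_{[0,T_\star]}\|E^\varepsilon(t)\|_s\le K(T_\star)\varepsilon$, hence the stated bound on $\|u^\varepsilon-u_0\|_s$.

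The main obstacle is the uniform-in-$\varepsilon$ energy estimate, and more precisely the bookkeeping that shows no genuine $O(\varepsilon^{-1})$ term survives on the non-dissipated part of the (error) unknown: this is where Conditions \ref{cond:2-ssc-4} and \ref{cond:2-ssc-5} must be used in tandem, and where the commutators generated by high-order differentiation and by the $u$-dependence of $A_0$, $A_j$ and $Q$ have to be organized so that they are genuinely lower order relative to the dissipation. Getting these quadratic-form manipulations right — rather than any conceptual difficulty — is the delicate part of the argument.
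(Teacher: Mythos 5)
You should first be aware that the paper you are working from does not prove Theorem \ref{thm:convergence} at all: it is imported verbatim from \cite{peng2025convergence} (``the following convergence result is established in \cite{peng2025convergence}''), so there is no in-paper proof to compare against. That said, your outline is consistent with what that reference is described as doing (``a unified energy estimate'') and with the standard route of \cite{yong1999singular,lattanzio2001hyperbolic,peng2016parabolic}: symmetrized high-order energy estimates for uniform bounds, plus an error equation for $U^\varepsilon-U^{\mathrm{app}}$ in which Condition \ref{cond:2-ssc-5} removes the singular flux acting on the undamped component and Condition \ref{cond:2-ssc-4} supplies the $\varepsilon^{-2}$ dissipation that absorbs everything else. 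So the architecture is right.

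The concrete gap is in your first step. After pairing $\varepsilon^{-1}A_0(U)A_j(U)\partial_{x_j}\partial^\alpha U$ with $\partial^\alpha U$ and integrating by parts, the surviving term is $-\tfrac{1}{2\varepsilon}\int \partial_{x_j}\bigl[(A_0A_j)(U)\bigr]\,\partial^\alpha U\cdot\partial^\alpha U\,dx$, whose coefficient contains $\partial_U(A_0A_j)\,\partial_{x_j}U$ and is therefore of size $\varepsilon^{-1}\|\nabla U\|_{L^\infty}$, not ``$O(1)$ commutator terms'' as you assert. In the quasilinear case this term is genuinely singular and cannot be closed by Young's inequality against the dissipation, which acts only on $U^{II}$; one must exploit that the offending derivatives either fall on the damped component (so that $\|\nabla U^{II}\|=O(\varepsilon)$ cancels the $\varepsilon^{-1}$) or hit blocks that vanish at equilibrium by Condition \ref{cond:2-ssc-5}. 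This is precisely why, in \cite{lattanzio2001hyperbolic,peng2016parabolic,peng2025convergence}, uniform existence is not established for $U^\varepsilon$ in isolation and then followed by a separate error estimate: both are obtained simultaneously from the energy estimate on $E^\varepsilon=U^\varepsilon-U^{\mathrm{app}}$, whose smallness is what tames the $\varepsilon^{-1}$ commutators. Your own step two is the correct vehicle; step one as written would not close. A second, minor point: you cannot ``choose'' $w^{\mathrm{app}}(\cdot,0)=\tilde U^{II}(\cdot,\varepsilon)$, since $w^{\mathrm{app}}=\varepsilon w_1(u_0)+\cdots$ is dictated by the expansion; the theorem imposes no consistency on $\tilde U^{II}$, and the resulting $O(1)$ initial discrepancy in $E^{II}$ must instead be handled by the $\varepsilon^{-2}$ damping (the conclusion only controls the first component, so this is harmless but needs to be said correctly).
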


\begin{remark}
    Theorem \ref{thm:convergence} ensures that solution of the relation system \eqref{equ:2-1-pde} converges to that of the limit equation \eqref{4eq:u0-equ} under Conditions \ref{cond:2-ssc-1}-\ref{cond:2-ssc-5}. 
    For our purpose of validating the relaxation system for a target given system  \eqref{equ:hp-equ}, we also need to verify that the limit equation \eqref{4eq:u0-equ} admitted by $u_0$ is exactly the same as the target hyperbolic-parabolic system \eqref{equ:hp-equ} we consider.
\end{remark}

\begin{remark}
Though there seems to be no theory on the existence of solution to the general equation  \eqref{4eq:u0-equ}, for the specific target equations \eqref{equ:hp-equ} considered in work, which are all symmetrizable hyperbolic-parabolic systems, they admit a unique well-defined local-in-time solution according to the local existence theory in \cite{kawashima1984systems} (Theorem 2.9). For general local existence results on hyperbolic-parabolic systems, we refer the readers to \cite{ladyv1968linear,taylor1997partial}.
\end{remark}

\section{Validity of relaxation models}
\label{sec3}

In this section, we justify the validity of five relaxation models arising from numerical schemes for the hyperbolic-parabolic system \eqref{equ:hp-equ} by verifying the convergence criteria \ref{cond:2-ssc-1}-\ref{cond:2-ssc-5}.
Each of these models is an approximation of a special case of \eqref{equ:hp-equ}.

\subsection{Relaxation model for 1D CDE}
\label{sec3.1}

The first relaxation model we consider is to approximate the 1D CDE
\begin{equation}\label{eg1:equ-1}
    \partial_t u + \partial_x f(u) = \partial_{xx} b(u),
\end{equation}
where $u=u(t, x)$ is the unknown function and $\partial_u b(u)>0$.
To overcome the time-step restriction $\Delta t = O( \Delta x^2)$ for explicit method of the CDE \eqref{eg1:equ-1}, the following hyperbolic system with diffusive relaxation  was formulated as a basis for developing efficient numerical schemes \cite{JPT1998,JP2000,NP2000,BR2013,BPR2013,Boscarino2014HighSisc}:
\begin{equation}\label{eg1:equ-2}
\begin{aligned}
    &\partial_t u+ \partial_x v=0, \\
    &\varepsilon^2 \partial_t v+ \partial_x b(u) = -v + f(u).
\end{aligned}
\end{equation}
Here $v=v(t, x)$ is a dissipative variable approximating $f(u)-\partial_x b(u)$. For this system we have $q(U;\varepsilon)=-v + f(u)$ according to the notation in Condition \ref{cond:2-ssc-1}. Then $q(U;0)=0$ if and only if $v=f(u)$, which violates Condition \ref{cond:2-ssc-2}.
Thus instead of using the original form \eqref{eg1:equ-2} to verify convergence criteria, we introduce $w = \varepsilon v$ and rewrite the system as
\begin{equation}\label{n33}
\begin{aligned}
    &\partial_t u+ \frac{1}{\varepsilon} \partial_x w =0, \\
    &\partial_t w + \frac{1}{\varepsilon}\partial_x b(u) = -\frac{w}{\varepsilon^2} + \frac{f(u)}{\varepsilon}.
\end{aligned}
\end{equation}
It is easy to see that the limit equation for $u$ in \eqref{n33} is exactly the CDE \eqref{eg1:equ-1}.

Next we show that \eqref{n33} admits Conditions \ref{cond:2-ssc-1}-\ref{cond:2-ssc-5}. 
Note that Condition \ref{cond:2-ssc-1} is obviously true and Condition \ref{cond:2-ssc-2} holds since $q(U;\varepsilon)=-w + \varepsilon f(u)$ for the transformed system  \eqref{n33}. To verify the other conditions, we set $U=(u, ~w)^T$ and rewrite \eqref{n33} as
\begin{equation}\nonumber
    \partial_t U + \frac{1}{\varepsilon}A(U;\varepsilon)\partial_x U = \frac{1}{\varepsilon^2} Q(U;\varepsilon)
\end{equation}
with 
\begin{equation}\nonumber
\begin{aligned}
    A(U;\varepsilon) = \begin{pmatrix}
        0&1\\
        \partial_u b(u) & 0
    \end{pmatrix},
    \quad
   Q(U;\varepsilon)
   :=
   \begin{pmatrix}
        0\\        q(u,w;\varepsilon)
    \end{pmatrix}
   = 
   \begin{pmatrix}
        0\\
        -w + \varepsilon f(u)
    \end{pmatrix}. 
\end{aligned}    
\end{equation}
With this form we see that $A^{11}(U;\varepsilon)=0$ and Condition \ref{cond:2-ssc-5} is admitted.
Moreover, we define a symmetric matrix
\begin{equation}\nonumber
\begin{aligned}
    A_0(U;\varepsilon) = \begin{pmatrix}
        \partial_u b(u) & \varepsilon \partial_u f(u)\\
        \varepsilon \partial_u f(u) & 1
    \end{pmatrix},
\end{aligned}
\end{equation}
which is positive definite if 
\begin{equation}\nonumber
    \varepsilon \abs{\partial_u f(u)} < \sqrt{\partial_u b(u)}.
\end{equation}
This is the so-called subcharacteristic condition \cite{liu1987hyperbolic} and is satisfied for sufficiently small $\varepsilon$. 
It is easy to check that $A_0(U;\varepsilon)A(U;\varepsilon)$ is symmetric and $A_0(u,0;0)\partial_U Q(u,0;0) = \diag\{0, -1\}$, which leads to the satisfaction of Conditions \ref{cond:2-ssc-3} and \ref{cond:2-ssc-4}. Thus we have verified all the conditions and the validity of the relaxation model \eqref{eg1:equ-2} as an approximation of \eqref{eg1:equ-1} is guaranteed.

\subsection{Relaxation model for viscous conservation law}\label{sec3.2}

The second relaxation model is to approximate the multi-dimensional viscous conservation law \cite{chen2023relaxation}
\begin{equation}\label{equ:examp-312-limit}
\begin{aligned}
   \partial_t u + \sum_{j=1}^d \partial_{x_j} f_j(u) 
   = \sum_{j=1}^d \partial_{x_j}
   \brac{ \sum_{k=1}^d B_{jk}(u) \partial_{x_k} u},
\end{aligned}    
\end{equation}
where $u=u(x, t)$ is the unknown $n$-vector valued function of $(x, t)=(x_1,\cdots,x_d, t)\in \mathbb{R}^d\times [0, +\infty)$, $f_j=f_j(u), j=1,\ldots, d$ are $n$-vector valued flux functions of $u$, and $B_{jk}=B_{jk}(u), j,k=1,2,\ldots, d$ are $n\times n$-matrix valued functions. 
According to \cite{chen2023relaxation}, system \eqref{equ:examp-312-limit} is assumed to satisfy 
\begin{enumerate}[label=(A\arabic*)]
\item \label{assm:6-1} There exists a convex entropy function $\eta(u)$ together with associated entropy fluxes $g_j=g_j(u), j=1,2,\dots,d$ such that 
$  \eta_{uu}(u)>0, \quad \partial_u g_{j} = \eta_{u} \partial_u f_{j}, \quad j=1,2,...,d.$
 \item \label{assm:6-2} The $nd\times nd$ matrix matrix $\diag\{\eta_{uu}, \ldots, \eta_{uu}\}B$ is symmetric positive definite, where $B$ is defined as
\begin{equation}\nonumber
    B = \begin{pmatrix}
        B_{11}(u) & \ldots & B_{1d}(u) \\
        \vdots & \vdots & \vdots\\
        B_{d1}(u) & \ldots & B_{dd}(u) \\
    \end{pmatrix}\in \mathbb{R}^{nd\times nd}.
\end{equation}
\end{enumerate}
Under the above assumptions, $B$ is invertible and its eigenvalues are all positive. Additionally, it holds the entropy inequality 
\begin{equation}\nonumber
    \partial_t \eta  + \sum_{j=1}^d \partial_{x_j} g_j(u) - \sum_{j, k=1}^d \partial_{x_j}\brac{  \eta_u B_{jk}\partial_{x_k} u} = -\sum_{j, k=1}^d  \eta_{uu}B_{jk}\partial_{x_j} u \partial_{x_k} u \leq 0.
\end{equation}
Here we remark that the positive definiteness assumption for the matrix $\diag\{\eta_{uu}, \ldots, \eta_{uu}\}B$ is for the following verification of convergence criteria, while this matrix is only required to be positive semi-definite for numerical computations in \cite{chen2023relaxation}.  

By relaxing both the convective and diffusive fluxes and extending the ideal of Jin-Xin relaxation model for hyperbolic conservation law \cite{jin1995relaxation}, a hyperbolic relaxation system is developed in \cite{chen2023relaxation} for \eqref{equ:examp-312-limit} (Here we reformulate the original model in \cite{chen2023relaxation} with $w_i = \varepsilon v_i$.):
\begin{equation}
\begin{aligned}\label{equ:6-2}
    &\partial_t u+\frac{1}{\varepsilon} \sum_{j=1}^d \partial_{x_j} w_j =0, \\
    &\partial_t w_i +\frac{1}{\varepsilon} \sum_{j=1}^d  \brac{B_{ij}(u)+\varepsilon^2 a^2 I_n} \partial_{x_j} u =\frac{1}{\varepsilon^2}(\varepsilon f_i(u)- w_i ), \quad i=1,\dots, d,
\end{aligned}
\end{equation}
with $w_i, i=1,2, \ldots, d \in \mathbb R^{n}$ being relaxation variables.
Denoting $U=(u, w_1, \dots, w_d)\in \mathbb{R}^{n+nd}$, we rewrite the system \eqref{equ:6-2} as 
\begin{equation}\label{nn310}
\begin{split}
&    \partial_t U + \frac{1}{\varepsilon}\sum_{j=1}^d A_j(U; \varepsilon)\partial_{x_j} U =\frac{1}{\varepsilon^2} Q(U;\varepsilon),\\
& Q(U;\varepsilon) = (0, ~\varepsilon f_1(u)- w_1, \dots, ~\varepsilon f_d(u)- w_d)^T \in \mathbb{R}^{n+nd},\\
& A_j(U; \varepsilon) 
 =  \begin{pmatrix}
        O_{n\times n} & I_n\delta_{j1} & \dots & I_n \delta_{jd} \\
        B_{1j}(u)+\varepsilon^2 a^2 I_n & O_{n\times n} & \dots & O_{n\times n}\\
        \vdots & \vdots & \vdots & \vdots \\
        B_{dj}(u)+\varepsilon^2 a^2 I_n & O_{n\times n} & \dots & O_{n\times n}
    \end{pmatrix}
    \in \mathbb{R}^{n(1+d)\times n(1+d)}, \quad j=1, \dots, d.
\end{split}
\end{equation}
Here $\delta_{jk}$ denotes the delta function, $I_n$ the $n\times n$ identity matrix and $O_{n\times n}$ the $n \times n$ zero matrix.

Now we verify Conditions \ref{cond:2-ssc-1}-\ref{cond:2-ssc-5}. Note that Conditions \ref{cond:2-ssc-1}, \ref{cond:2-ssc-2} and \ref{cond:2-ssc-5} are obviously true.
Define 
\begin{equation}\nonumber
    H= B+ \varepsilon^2 a^2 \begin{pmatrix}
        I_n & \ldots & I_n\\
        \vdots & \vdots & \vdots\\
        I_n & \ldots & I_n \\ 
    \end{pmatrix}
\end{equation}
and compute
\begin{equation*}
    \diag\{\eta_{uu}, \ldots, \eta_{uu}\} H
    = \diag\{\eta_{uu}, \ldots, \eta_{uu}\} B
    + \varepsilon^2 a^2 \begin{pmatrix}
        \eta_{uu} & \ldots & \eta_{uu}\\
        \vdots & \vdots & \vdots\\
        \eta_{uu} & \ldots & \eta_{uu} \\ 
    \end{pmatrix},
\end{equation*}
which is positive definite according to Assumption \ref{assm:6-2}. Thus $H$ is invertible and we define the symmetrizer as
\begin{equation}\nonumber
    A_0 = \diag\{\eta_{uu},~ \diag\{\eta_{uu}, \ldots, \eta_{uu} \} H^{-1}\},
\end{equation}
the positive definiteness of which is due to that of
\begin{equation*}
H^T [\diag\{\eta_{uu}, \ldots, \eta_{uu} \} H^{-1}]H =\brac{\diag\{\eta_{uu}, \ldots, \eta_{uu} \} H}^T.
\end{equation*}
With the above matrix $H$, the matrices  $A_j(U;\varepsilon),j=1,2,...,d$ in \eqref{nn310} can be written as 
\begin{equation*}
    A_j(U;\varepsilon) = \begin{pmatrix}
        O_{n\times n} &\begin{pmatrix}
            I_n \delta_{j1} & ... & I_n\delta_{jd}
        \end{pmatrix} \\
        H \begin{pmatrix}
            I_n\delta_{j1} & ... & I_n\delta_{jd}
        \end{pmatrix}^T & O_{nd\times nd}
    \end{pmatrix}, \quad
    j=1,2,...,d.
\end{equation*}
Then we have
\begin{equation*}
    \begin{split}
        A_0 A_j 
        &
        = 
        \begin{pmatrix}
        O_{n\times n} &  \begin{matrix}
            (\eta_{uu}\delta_{j1} & ... & \eta_{uu}\delta_{jd})
        \end{matrix} \\
        \diag\{\eta_{uu}, \ldots, \eta_{uu} \} H^{-1} H \begin{pmatrix}
            I_n\delta_{j1} & ... & I_n\delta_{jd}
        \end{pmatrix}^T & O_{nd\times nd}
    \end{pmatrix} \\
    &
    =
     \begin{pmatrix}
        O_{n\times n} &  \begin{pmatrix}
            \eta_{uu}\delta_{j1} & ... & \eta_{uu}\delta_{jd}
        \end{pmatrix} \\
        \begin{pmatrix}
            \eta_{uu}\delta_{j1} & ... & \eta_{uu}\delta_{jd}
        \end{pmatrix}^T & O_{nd\times nd}
    \end{pmatrix}, 
    \end{split}
\end{equation*}
which is symmetric and leads to the satisfaction of Condition \ref{cond:2-ssc-3}. Finally, Conditions \ref{cond:2-ssc-4} holds since
\begin{equation*}
    A_0(u,0;0)\partial_U Q(u,0;0)=\diag\{0,~ -\diag\{\eta_{uu}, \ldots, \eta_{uu} \} B\}
    \leq 0.
\end{equation*}
Thus the validity of the relaxation model \eqref{equ:6-2} for the viscous conservation law \eqref{equ:examp-312-limit} is guaranteed.

\subsection{Relaxation model for multidimensional nonlinear diffusion equation}

In this example, we consider a relaxation model for the nonlinear diffusion equation 
\begin{equation}\label{equ:eg3-3-1}
    \partial_t u - \Delta p(u) = 0,
\end{equation}
where $u=u(x, t)$ is an unknown function and $p(u)$ is a given function satisfying $\partial_u p(u) > 0$.
This class of PDEs appears in filtration, phase transition, biochemistry, image analysis, and dynamics of biological groups  \cite{Aronson1986,PeronaMalik1990,OkuboLevin2001,Vazquez2007}.
In \cite{cavalli2012discontinuous}, a semilinear hyperbolic system is constructed to approximate \eqref{equ:eg3-3-1} by extending the idea of Jin-Xin relaxation model for hyperbolic conservation laws \cite{jin1995relaxation}. 
Specifically, an additional auxiliary variable is introduced for the common relaxation model so that the convective part of the final system is linear. Thanks to this advantage,  the resulting numerical schemes based on the relaxation model do not require solving implicit nonlinear problems \cite{cavalli2012discontinuous}.

The relaxation model in \cite{cavalli2012discontinuous} for \eqref{equ:eg3-3-1} reads as 
\begin{equation}\label{n310}
\begin{aligned}
    &\partial_t u +\nabla \cdot \hat v =0, \\
    &\partial_t \hat v + \frac{1}{\varepsilon^2} \nabla \hat w = -\frac{ \hat v}{\varepsilon^2}, \\
    &\partial_t \hat w + a^2 \nabla \cdot \hat v
    =\frac{1}{\varepsilon^2}(p(u)-\hat w),
\end{aligned}
\end{equation}
where $\hat v =(\hat v_1(x, t), \dots, \hat v_d(x, t))$ and $\hat w = \hat w(x, t)$ are two auxiliary variables and $a$ is a constant satisfying the subcharacteristic condition
\begin{equation}\label{n311}
    -a < \partial_u p(u) < a. 
\end{equation}
It is easy to see that the limit equation of $u$ in \eqref{n310} is the target equation \eqref{equ:eg3-3-1}.
To check the convergence criteria, we set $v = \varepsilon \hat v$ and $w= \hat w - p(u)$ and transform the system \eqref{n310} to
\begin{equation}\nonumber
\begin{aligned}
    &\partial_t u + \frac{1}{\varepsilon} \nabla \cdot v=0, \\
    &\partial_t v + \frac{1}{\varepsilon} \nabla( w + p(u)) =-\frac{1}{\varepsilon^2}v, \\
    &\partial_t w + \frac{1}{\varepsilon} (a^2-\partial_u p(u)) \nabla \cdot v=-\frac{1}{\varepsilon^2}w.
\end{aligned}
\end{equation}
With notation $U = (u, v_1, \dots, v_d, w)^T$, the above system can be written as 
\begin{equation}\nonumber
\begin{split}
&    \partial_t U + \frac{1}{\varepsilon}\sum_{j=1}^d A_j(U)\partial_{x_j} U =\frac{1}{\varepsilon^2} Q(U),\\
& A_j(U) 
 = \begin{pmatrix}
        0 & e_j^T & 0\\[2mm]
        \partial_u p(u)e_j & O_{d\times d} & e_j \\
        0 & (a^2-\partial_u p(u)) e_j^T & 0 
    \end{pmatrix}, ~j=1, \dots, d, 
\quad Q(U) = \begin{pmatrix}
        0 \\ -v \\ -w
    \end{pmatrix}.
\end{split}
\end{equation}
Here $e_j$ is the $j$-th vector of the canonical basis in $\mathbb{R}^d$. 
With this form, one can easily see that Conditions \ref{cond:2-ssc-1}, \ref{cond:2-ssc-2} and \ref{cond:2-ssc-5} are true. 
Moreover, since $0 < \partial_u p(u) < a^2$ according to \eqref{n311}, we define a symmetrizer 
$
   A_0 = \diag\{p'(u), ~I_d, ~(a^2-\partial_u p(u))^{-1}\}.
$
It is straightforward to compute that $A_0A$ is symmetric and 
$$
A_0(u,0;0) \partial_U Q(u,0;0)=\diag\{0, ~-I_d, ~-(a^2-\partial_u p(u))^{-1}\} \leq 0.
$$
Thus Conditions \ref{cond:2-ssc-3} and \ref{cond:2-ssc-4} are also true and the validity of the relaxation model \eqref{n310} approximating \eqref{equ:eg3-3-1} is guaranteed.

\subsection{Lattice Boltzmann model for CDE}

The above relaxation models are all constructed via direct relaxation for the convective or diffusive flux of the given system. 
In the following we consider two kinetic approximation models, which are relaxation systems for a set of distribution functions. 
Specifically, we first consider a lattice Boltzmann model for the nonlinear CDE in two dimensions:
\begin{equation}\label{equ:lbe-1}
    \begin{aligned}
        \partial_t u + \nabla\cdot f(u) = \nabla \cdot (D(u) \nabla u),
    \end{aligned}
\end{equation}
where $u=u(x,t) \in \mathbb R^2$, $f = f(u)$ and $D=D(u)$ are the given flux function and diffusion coefficient of $u$.
In the literature, \eqref{equ:lbe-1} has been frequently solved with the mesoscopic LBM \cite{ginzburg2007lattice,ginzburg2012truncation,shi2009lattice,chai2013lattice,Yoshida2010Multiple,Huang2015boundary,zhang2019lattice}, which is a direct discretization of the continuous lattice Boltzmann model. In contrast to the wide application of the LBM for \eqref{equ:lbe-1}, rigorous justification of the corresponding lattice Boltzmann model is rare. Here we address this problem by verifying the convergence criteria in Section \ref{sec2}.

The lattice Boltzmann model for the 2D CDE \eqref{equ:lbe-1} reads as \cite{ginzburg2007lattice,ginzburg2012truncation,shi2009lattice,chai2013lattice,Yoshida2010Multiple,Huang2015boundary,zhang2019lattice}
\begin{equation}\label{equ:lbe-2}
    \partial_t g_i + \frac{1}{\varepsilon} \xi_i \cdot \nabla g_i = \frac{1}{\varepsilon^2} \frac{1}{\tau}(g_i^{(eq)} - g_i), \quad i=1,\ldots, 5,
\end{equation}
where $g_i=g_i(x,t)$ is the distribution function in the $i$-th direction; 
$\varepsilon>0$ is a small parameter; the discrete velocities are ${\xi}_1= (0,0)^T$, ${\xi}_2 =-{\xi}_4 =(1,0)^T$, ${\xi}_3=-{\xi}_5=(0,1)^T$; 
$\tau=\tau(u)$ is the relaxation time; and $g^{(eq)}_i$ is the equilibrium distribution function given by
\begin{equation}\nonumber
    g^{(eq)}_i = \omega_i u + 3\varepsilon \omega_i \xi_i \cdot f(u)
\end{equation}
with $\omega_0 = \frac{1}{3}$, $\omega_{1,2,3,4} = \frac{1}{6}$ the weight coefficients.
The macroscopic variable $u$ is defined via distribution functions as
\begin{equation}\nonumber
    u=\sum_{i=1}^5 g_i.
\end{equation}

We first derive the limit equation of \eqref{equ:lbe-2} as $\varepsilon \rightarrow 0$. To do this, we sum up equations \eqref{equ:lbe-2} over $i$ and use $\sum_{i=1}^5 g_i^{(eq)}=u$ to obtain
\begin{equation}\label{equ:lbe-3}
    \partial_t u + \frac{1}{\varepsilon} \nabla\cdot (\sum_{k=1}^5 \xi_k g_k) =  0.
\end{equation}
On the other hand, it follows from \eqref{equ:lbe-2} that
\begin{equation}\nonumber
    \begin{aligned}
        g_i ={}& g_i^{(eq)}-\varepsilon \tau \xi_i \cdot \nabla g_i - \varepsilon^2 \tau \partial_t g_i = \omega_i u + 3\varepsilon \omega_i \xi_i \cdot f(u) - \varepsilon \tau w_i \xi_i \cdot \nabla  u + O(\varepsilon^2).
    \end{aligned}
\end{equation}
Substituting above relation into \eqref{equ:lbe-3} gives
\begin{equation*}
 \begin{aligned}
    \partial_t u + \nabla \cdot f(u) = \nabla \cdot (\frac{1}{3}\tau \nabla u)
    + O(\varepsilon^2).
\end{aligned}
\end{equation*}
From this we see that the limit equation of $u$ for the lattice Boltzmann model \eqref{equ:lbe-2} is exactly the CDE \eqref{equ:lbe-1} if  $\tau=3D$ is adopted.

To verify the convergence criteria, we denote $h_i = g_i - w_i u$ and use \eqref{equ:lbe-2} and \eqref{equ:lbe-3} to obtain
\begin{equation}\label{equ:lbe-5}
\begin{aligned}
    &\partial_t u + \frac{1}{\varepsilon} \nabla\cdot (\sum_{k=1}^5 \xi_k h_k) =  0,\\
    &\partial_t h_i + \frac{1}{\varepsilon}  \nabla \cdot \brac{\xi_i h_i + \xi_i w_i u - w_i \sum_{k=1}^5 \xi_k h_k } = \frac{1}{\varepsilon^2} \frac{1}{\tau}(3\varepsilon \omega_i \xi_i \cdot f(u) - h_i), \quad i=1,\ldots, 5.
\end{aligned}  
\end{equation}
With $U=(u, w)^T$ and $w=(h_2, \ldots, h_5)^T$,  \eqref{equ:lbe-5} can be written in the form of \eqref{equ:2-1-pde} as
\begin{equation}\label{equ:key-lbe}
\begin{aligned}
    \partial_t U + \frac{1}{\varepsilon}\sum_{j=1}^2 A_j \partial_{x_j} U = \frac{1}{\varepsilon^2}Q(U;\varepsilon),
\end{aligned}    
\end{equation}
where 
\begin{equation*}
\begin{aligned}\label{equ:key-lbe-2}
    &A_j = \small \begin{pmatrix}
        0 & \xi_2^{(j)}-\xi_1^{(j)} & \xi_3^{(j)}-\xi_1^{(j)} & \xi_4^{(j)}-\xi_1^{(j)} & \xi_5^{(j)}-\xi_1^{(j)} \\[2mm]
        \omega_2 \xi_2^{(j)} & \xi_2^{(j)}-\omega_2(\xi_2^{(j)}-\xi_1^{(j}) & -\omega_2(\xi_3^{(j)}-\xi_1^{(j)}) & -\omega_2(\xi_4^{(j)}-\xi_1^{(j)}) & -\omega_2(\xi_5^{(j)}-\xi_1^{(j)}) \\[2mm]
        \omega_3 \xi_3^{(j)} & -\omega_3(\xi_2^{(j)}-\xi_1^{(j)}) & \xi_3^{(j)}-\omega_3(\xi_3^{(j)}-\xi_1^{(j)}) & -\omega_3(\xi_4^{(j)}-\xi_1^{(j)}) & -\omega_3(\xi_5^{(j)}-\xi_1^{(j)}) \\[2mm]
        \omega_4 \xi_4^{(j)} & -\omega_4(\xi_2^{(j)}-\xi_1^{(j)}) & -\omega_4(\xi_3^{(j)}-\xi_1^{(j)}) & \xi_4^{(j)}-\omega_4(\xi_4^{(j)}-\xi_1^{(j)}) & -\omega_4(\xi_5^{(j)}-\xi_1^{(j)}) \\[2mm]
        \omega_5 \xi_5^{(j)} & -\omega_5(\xi_2^{(j)}-\xi_1^{(j)}) & -\omega_5(\xi_3^{(j)}-\xi_1^{(j)}) & -\omega_5(\xi_4^{(j)}-\xi_1^{(j)}) & \xi_5^{(j)}-\omega_5(\xi_5^{(j)}-\xi_1^{(j)})\\[2mm]
    \end{pmatrix},\\[3mm] 
   & Q(U;\varepsilon)
    = (0, ~q(u, w;\varepsilon))^T
    :=\brac{0, ~\frac{1}{\tau}( 3\varepsilon \omega_2 \xi_2 \cdot f(u)  - h_2 ), \ldots, \frac{1}{\tau}( 3\varepsilon \omega_5 \xi_5 \cdot f(u)  - h_5) }^T.
\end{aligned}    
\end{equation*}
In the above equation $\xi_i^{(j)}$ is the $j$-th component of $\xi_i$. 
%
Now we verify Conditions \ref{cond:2-ssc-1}-\ref{cond:2-ssc-5} for the system \eqref{equ:key-lbe}.
Note that Conditions \ref{cond:2-ssc-1} and \ref{cond:2-ssc-5} naturally holds as $A_j^{11}=0$. On the other hand, it is easy to see that
\begin{equation}\nonumber
  \begin{aligned}
      q(u, w; 0) = 0 \iff w=(h_2,\dots, h_5)^T = 0
  \end{aligned}  
\end{equation}
and 
\begin{equation}\label{n316}
\partial_U Q(u, 0;0) = -\frac{1}{\tau}\diag\{0, ~1, \ldots, 1\},
\end{equation}
thus Condition \ref{cond:2-ssc-2} is true. 
Moreover, define 
\begin{equation}
\begin{aligned}\nonumber
    P = \begin{pmatrix}
        1 & 1 & 1 & 1 & 1\\
        -\omega_2 & 1-\omega_2 & -\omega_2 & -\omega_2 & -\omega_2 \\
        -\omega_3 & -\omega_3 & 1-\omega_3 & -\omega_3 & -\omega_3 \\
        -\omega_4 & -\omega_4 & -\omega_4 & 1-\omega_4 & -\omega_4 \\
        -\omega_5 & -\omega_5 & -\omega_5 & -\omega_5 & 1-\omega_5 \\
    \end{pmatrix},
\end{aligned}    
\end{equation}
which is invertible since $\det(P) = 1$.  It is direct to verify that
\begin{equation}\nonumber
    A_j P  
    = P \diag\{\xi_1^{(j)}, \xi_2^{(j)}, \ldots, \xi_5^{(j)}\},\quad j=1, ~2.
\end{equation}
Then the symmetrizer in Condition \ref{cond:2-ssc-3} can be chosen as 
\begin{equation}\nonumber
    A_0  = P^{-T}\diag\{\frac{1}{2}, ~1, \dots, ~1\} P^{-1} = \begin{pmatrix}
        \frac{1}{6} & 0\\[2mm]
        0 & \frac{1}{2}\mathbf{1}_4 \mathbf{1}_4^T + I_4
    \end{pmatrix},
\end{equation}
where $\mathbf{1}_4=(1,1,1,1)^T$. 
With the above two equations we see that $A_0 A_j, j=1,2$ are symmetric and Condition \ref{cond:2-ssc-3} is satisfied. Furthermore, it follows from \eqref{n316} and the definition of $A_0$ that 
\begin{equation}\nonumber
\begin{aligned}
A_0 \partial_U Q(u, 0;0)
=-\frac{1}{\tau}\begin{pmatrix}
   0 & 0  \\
   0 & \frac{1}{2}\mathbf{1}_4 \mathbf{1}_4^T + I_4
\end{pmatrix}\leq 0
\end{aligned}
\end{equation}
which implies Condition \ref{cond:2-ssc-4}. 
Thus we have verified all the conditions and justified the validity of the system \eqref{equ:key-lbe}, or equivalently the lattice Boltzmann model \eqref{equ:lbe-1}.

\subsection{Diffusive kinetic model for nonlinear parabolic systems}

In the last example, we consider a kinetic model in \cite{Aregba2001diffusion,aregba2004explicit} for the following multi-dimensional nonlinear parabolic system
\begin{equation}\label{equ:dk-1}
    \begin{aligned}
       \partial_t u + \sum_{j=1}^d \partial_{x_j} F_{j}(u) = \sum_{j=1}^d \partial_{x_j}^2 B(u),
    \end{aligned}
\end{equation}
where $u=(u_1, \dots, u_K)^T \in \Omega$ is function of $(x, t)=(x_1,\cdots,x_d, t)\in \mathbb{R}^d\times [0, +\infty)$, and $F_{j}= F_{j}(u)$ and $B=B(u)$ are $K$-vector valued functions of $u$.
System \eqref{equ:dk-1} is assumed to satisfy \cite{aregba2004explicit}
\begin{enumerate}[label=(B\arabic*)]
    \item \label{B1} For all $\xi \in \mathbb{R}^d$, $\sum_{j=1}^d \xi_j \partial_u F_{j}(u)$ has real eigenvalues and is diagonalizable.
    \item \label{B2} The  eigenvalues of $\partial_u B(u)$ are positive.
\end{enumerate}
Here we remark that Assumption \ref{B1} means the hyperbolicity of the convective part; Assumption \ref{B2} states that the system is strictly parabolic, which is required in our following analysis and is stronger than the original assumption of degenerate parabolic systems in \cite{aregba2004explicit}.

By extending the work in \cite{Aregba2001diffusion} for the scalar case, a diffusive kinetic relaxation model was proposed in \cite{aregba2004explicit} for system \eqref{equ:dk-1}:
\begin{equation}\label{equ:dk-2}
    \begin{aligned}
        &\partial_t f_l + \sum_{j=1}^d \lambda_{l j} \partial_{x_j} f_l =\frac{1}{\varepsilon} (M_l(u )-f_l ),\quad  1 \leq l \leq N, \\ 
        &\partial_t f_{N+m} + \sum_{j=1}^d \gamma^\varepsilon \sigma_{m j} \partial_{x_j} f_{N+m} =\frac{1}{\varepsilon} \brac{\frac{B (u )}{N^{\prime} \theta^2}-f_{N+m} },\quad  1 \leq m \leq N^{\prime},
    \end{aligned}
\end{equation}
where $u(x, t)=\sum_{l=1}^{N+N^{\prime}} f_l(x, t)\in \mathbb{R}^K$, each $f_l$ and $M_l$ take values in $\mathbb{R}^K, \varepsilon>0$ is a small parameter, $\lambda_{l j}$ are some fixed constants, $\gamma^\varepsilon=\mu+\frac{\theta \sqrt{N^{\prime}}}{\sqrt{\varepsilon}}$, $ \mu \geq 0, \theta>0, N^{\prime} \geq$ $d+1$, and $\sigma_{mj}$ are constants satisfying
\begin{equation}\nonumber
    \sum_{m=1}^{N^{\prime}} \sigma_{mj}=0, \quad
\sum_{m=1}^{N^{\prime}} \sigma_{mj} \sigma_{mi} = \delta_{ij}, \quad \forall i,j=1,2,\ldots,d.
\end{equation}
The function $M_l$ is called a local Maxwellian function and satisfies
\begin{equation}\label{equ:dk-3}
    \sum_{l=1}^N M_l(u) = u- \frac{B(u)}{\theta^2}, \quad \sum_{l=1}^N \lambda_{lj}M_l(u) = F_j(u), \quad j=1,\dots, d.
\end{equation}
Following \cite{aregba2004explicit}, the diffusive kinetic model is assumed to satisfy 
\begin{enumerate}[label=(C\arabic*)]
    \item \label{C1} The function $M_l(u)$ is a strictly monotone Maxwellian function (SMFF), \ie, the eigenvalues of $\partial_u M_{l}(u)$ are positive for all $u \in \Omega$.
    \item \label{C2} There exists a basis of common eigenvectors for the matrices $\sum_{j=1}^d \xi_j \partial_u F_{j}(u), \partial_u M_{l}(u)(l=1, \ldots, N)$ and $\partial_u B(u)$ for any $\xi \in \mathbb{R}^d$ with $|\xi|=1$ and for all $u \in \Omega$.
\end{enumerate}
Denote by $H$ the matrix consisting of the basis of common eigenvectors above. Then we deduce from Assumptions \ref{B2} and \ref{C1} that $\partial_u M_{l}$ and $\frac{\partial_u B(u)}{N'\theta^2}$ can be diagonalized as
\begin{equation}\label{equ:dk-3-2}
    \partial_u M_{l} = H^{-1} \Lambda_l H,\quad l=1,\dots, N, \qquad \frac{\partial_u B(u)}{N'\theta^2} = H^{-1} \Lambda_B H,
\end{equation}
where $\Lambda_l$ and $\Lambda_B$ are positive-definite diagonal matrices.

To derive the limit equation of \eqref{equ:dk-2}, we sum up \eqref{equ:dk-2} over the index $l$ from $1$ to $N+N^\prime$ and utilize relations \eqref{equ:dk-3} to obtain
\begin{equation}\nonumber
    \partial_t u + \sum_{j=1}^d\brac{ \sum_{l=1}^N \lambda_{lj} \partial_{x_j} f_l + \sum_{m=1}^{N^\prime} \gamma^\varepsilon \sigma_{mj}\partial_{x_j}f_{N+m}}  = 0.
\end{equation}
Denoting $g_l = f_l - M_l(u),l=1,\dots, N$ and $g_{N+m} = f_{N+m} - \frac{B (u)}{N^{\prime} \theta^2}, m=1,\dots, N'$, we have 
\begin{equation}\nonumber
    \begin{aligned}
        &\partial_t g_l + \sum_{j=1}^d \brac{ \sum_{i=1}^N (I_K -\partial_u M_{i}(u)) \lambda_{i j}\partial_{x_j} f_i -   \partial_u M_{l}(u)\sum_{i=1}^{N^\prime} \gamma^\varepsilon \sigma_{ij}\partial_{x_j}f_{N+i} } \\
        & \hspace{0.5cm}= - \frac{1}{\varepsilon} g_l, \quad 1 \leq l \leq N, \\ 
        &\partial_t g_{N+m} + \sum_{j=1}^d \brac{\brac{I_K-\frac{\partial_u B(u)}{N^{\prime} \theta^2}} \sum_{i=1}^{N^\prime} \gamma^\varepsilon \sigma_{ij} \partial_{x_j}f_{N+i} -\frac{\partial_u B(u)}{N^{\prime} \theta^2} \sum_{i=1}^N \lambda_{i j}\partial_{x_j} f_i} \\
        &\hspace{0.5cm}= -\frac{1}{\varepsilon} g_{N+m}, \quad 1 \leq m \leq N^{\prime}.
    \end{aligned}  
\end{equation}
For convenience, we use $ \varepsilon^2$ to replace $\varepsilon$ in what follows. Then $\gamma^\varepsilon=\mu+\frac{\theta \sqrt{N^{\prime}}}{\varepsilon} $ and the equations of $u $ and $g_l, l=2,\dots, N+N'$ can be reformulated as 
\begin{equation}
\begin{aligned}\label{equ:dk-6}
    \partial_t u &+ \frac{1}{\varepsilon}\sum_{j=1}^d\brac{\varepsilon \partial_u F_{j}(u )\partial_{x_j}u + \sum_{l=2}^N \varepsilon (\lambda_{lj}-\lambda_{1j})\partial_{x_j}g_l + \sum_{i=1}^{N^\prime} (\varepsilon \mu \sigma_{ij}+\theta\sqrt{N'} \sigma_{ij} - \varepsilon\lambda_{1j})\partial_{x_j}g_{N+i} }\\
    ={}&  0,\\[2mm]
    \partial_t g_l &+ \frac{1}{\varepsilon} \sum_{j=1}^d \brac{\varepsilon (\lambda_{lj}- \partial_u F_{j}(u )) \partial_u M_{l}(u)\partial_{x_j} u +  \varepsilon \lambda_{lj} \partial_{x_j} g_l - \sum_{i=2}^N \varepsilon \partial_u M_{l}(u )(\lambda_{ij}-\lambda_{1j})\partial_{x_j}g_i } \\
    &-\frac{1}{\varepsilon} \sum_{j=1}^d \partial_u M_{l}(u)\sum_{i=1}^{N^\prime} (\varepsilon \mu \sigma_{ij}+\theta\sqrt{N'} \sigma_{ij} - \varepsilon\lambda_{1j}) \partial_{x_j}g_{N+i} \\
    ={}& - \frac{1}{\varepsilon^2} g_l, \quad l=2,\dots, N, \\[2mm] 
    \partial_t g_{N+m} & + \frac{1}{\varepsilon}\frac{ \partial_u B(u)}{N^{\prime} \theta^2}\sum_{j=1}^d \brac{ \brac{ (\varepsilon \mu + \theta \sqrt{N'})\sigma_{mj}- \varepsilon \partial_u F_{j}(u) } \partial_{x_j}u - \sum_{i=2}^N \varepsilon(\lambda_{ij}-\lambda_{1j}) \partial_{x_j}g_i } \\
    &+ \frac{1}{\varepsilon}\sum_{j=1}^d \sigma_{mj} (\varepsilon\mu + \theta\sqrt{N'})\partial_{x_j}g_{N+m} - \frac{1}{\varepsilon}\frac{\partial_u  B(u)}{N^{\prime} \theta^2}\sum_{j=1}^d \sum_{i=1}^{N^\prime} (\varepsilon \mu \sigma_{ij}+\theta\sqrt{N'} \sigma_{ij} - \varepsilon\lambda_{1j}) \partial_{x_j}g_{N+i} \\
    ={}& -\frac{1}{\varepsilon^2} g_{N+m}, \quad m=1,\dots, N'. 
\end{aligned}
\end{equation}
Here we have used relations in \eqref{equ:dk-3}, $\sum_{l=1}^{N+N^{\prime}} g_l = 0$, and  
\begin{equation}
    \begin{aligned}\nonumber
        &\sum_{l=1}^N \lambda_{lj} f_l = \sum_{l=1}^N \lambda_{lj}g_l + \sum_{l=1}^N \lambda_{lj}M_l(u) = \sum_{l=2}^N (\lambda_{lj}-\lambda_{1j})g_l - \lambda_{1j}\sum_{m=1}^{N'}g_{N+m} + F_j(u), \\
        &\sum_{m=1}^{N^\prime} \gamma^\varepsilon \sigma_{mj} f_{N+m} =\sum_{m=1}^{N^\prime} \gamma^\varepsilon \sigma_{mj} g_{N+m} + \sum_{m=1}^{N^\prime} \gamma^\varepsilon \sigma_{mj} \frac{B(u )}{N'\theta^2}=\sum_{m=1}^{N^\prime} \gamma^\varepsilon \sigma_{mj} g_{N+m}.
    \end{aligned}
\end{equation}
Moreover, it follows from \eqref{equ:dk-6} that
\begin{equation}
\begin{aligned}\nonumber
    g_l ={}& \varepsilon \sum_{j=1}^d \partial_u M_{l}(u ) \theta \sqrt{N'}\sum_{i=1}^{N'} \sigma_{ij}\partial_{x_j} g_{N+i} + O(\varepsilon^2),\\
    g_{N+m} ={}& \varepsilon \sum_{j=1}^d \brac{\frac{\partial_u B(u)}{\sqrt{N'}\theta}\sigma_{mj}\partial_{x_j}u -\theta\sqrt{N'} \sigma_{mj}\partial_{x_j} g_{N+m} + \frac{\partial_u B(u)}{\sqrt{N'}\theta}\sum_{i=1}^{N'} \sigma_{ij}\partial_{x_j} g_{N+i}} + O(\varepsilon^2),
\end{aligned}    
\end{equation}
which further yields
\begin{equation}
\begin{aligned}\nonumber
    g_l  ={}& O(\varepsilon^2), \quad l = 2, \dots, N,\\
    g_{N+m}  ={}& \varepsilon \sum_{j=1}^d \frac{\partial_u B(u)}{\sqrt{N'}\theta}\sigma_{mj}\partial_{x_j}u + O(\varepsilon^2), \quad m=1,\dots, N'.
\end{aligned}    
\end{equation}
Substituting these into the equation of $u $ in \eqref{equ:dk-6} gives  
\begin{equation}
\begin{aligned}\nonumber
    &\partial_t u + \sum_{j=1}^d \partial_u F_{j}(u)\partial_{x_j}u +\sum_{j,k=1}^d \sum_{m=1}^{N'} \partial_{x_j}\partial_u B(u) \sigma_{mj}\sigma_{mk}\partial_{x_k}u =0,
\end{aligned}    
\end{equation}
which is the parabolic system \eqref{equ:dk-1} by noticing that $\sum_{m=1}^{N^{\prime}} \sigma_{mj} \sigma_{mk} = \delta_{jk}$.

To verify the convergence criteria, we 
define $K(N+N') \times K(N+N')$ matrices 
\begin{equation}
\begin{aligned}\nonumber
    \tilde{A}_j  = \diag\{\varepsilon\lambda_{1j}I_K, ~ \varepsilon\lambda_{2j}I_K, \dots, \varepsilon\lambda_{Nj}I_K, ~ \sigma_{1j} (\varepsilon\mu + \theta\sqrt{N'})I_K, \dots, \sigma_{N'j} (\varepsilon\mu + \theta\sqrt{N'})I_K\}
\end{aligned}    
\end{equation}
with $j=1,2,...,d$
and 
\begin{equation}
\begin{aligned}\nonumber
    P = \begin{pmatrix}
        I_K & I_K & \dots & I_K & I_K & \dots & I_K\\
        -\partial_u M_{2}(u) & I_K-\partial_u M_{2}(u) & \dots  & -\partial_u M_{2}(u) & -\partial_u M_{2}(u) & \dots & -\partial_u M_{u}(u) \\
        \vdots & \vdots & \ddots  & \vdots  & \vdots & \ddots & \vdots\\
        -\partial_u M_{N}(u) & -\partial_u M_{N}(u) & \dots  & I_K-\partial_u M_{N}(u) & -\partial_u M_{N}(u) & \dots & -\partial_u M_{N}(u) \\[2mm]
       -\frac{\partial_u B(u)}{N^{\prime} \theta^2} &  -\frac{\partial_u B(u)}{N^{\prime} \theta^2} & \dots &  -\frac{\partial_u B(u)}{N^{\prime} \theta^2} &  I_K-\frac{\partial_u B(u)}{N^{\prime} \theta^2} & \dots &  -\frac{\partial_u B(u)}{N^{\prime} \theta^2}\\
       \vdots & \vdots & \ddots  & \vdots  & \vdots & \ddots & \vdots\\
       -\frac{\partial_u B(u)}{N^{\prime} \theta^2} &  -\frac{\partial_u B(u)}{N^{\prime} \theta^2} & \dots &  -\frac{\partial_u B(u)}{N^{\prime} \theta^2} &  -\frac{\partial_u B(u)}{N^{\prime} \theta^2} & \dots & I_K -\frac{\partial_u B(u)}{N^{\prime} \theta^2}
    \end{pmatrix}.
\end{aligned}    
\end{equation}
%
It is direct to compute that $\det(P)=1$ and thus $P^{-1}$ exists.
Denote $U=(u , w )\in \mathbb{R}^{K(N+N')}$ with $w =(g_2, \dots, ~g_N, ~g_{N+1}, \dots, ~g_{N+N'} )^T\in \mathbb{R}^{K(N-1+N')}$.
The system \eqref{equ:dk-6} can be written as 
\begin{equation}\label{equ:dk-8}
\begin{split}
&\partial_t U + \frac{1}{\varepsilon} \sum_{j=1}^d A_{j}(U;\varepsilon) \partial_{x_j} U = \frac{1}{\varepsilon^2} Q(U), \\
& A_{j}(U;\varepsilon) =P \tilde{A}_j P^{-1}, \quad j=1,\ldots, d, \quad Q(U) =- (0, ~w)^T.
\end{split}
\end{equation}

Next we show that system \eqref{equ:dk-8} admits Conditions \ref{cond:2-ssc-1}-\ref{cond:2-ssc-5}. 
First, Conditions \ref{cond:2-ssc-1} and \ref{cond:2-ssc-2} are obviously true since 
\begin{equation}\nonumber
  q(u, w; 0) = 0 \iff w = 0
\end{equation}
and
\begin{equation}\label{equ:dk-9}
\partial_U Q(u, 0;0) = - \diag\{0, 1, \ldots, 1\}.  
\end{equation}
On the other hand, it follows from the first equation in \eqref{equ:dk-6} that $A_j^{11}(u,w;\varepsilon) = \varepsilon F_j'(u )$, and thus Condition \ref{cond:2-ssc-5} is satisfied.
To verify the other two conditions, we define the symmetrizer as 
\begin{equation}\nonumber
   A_0 = P^{-T} \tilde{A}_0 P^{-1}, 
\end{equation}
where
\begin{equation}
\begin{aligned}\nonumber
    \tilde{A}_0 = \diag\{ H^{T} \Lambda_1^{-1} H, ~H^{T} \Lambda_2^{-1} H, \dots, H^{T} \Lambda_N^{-1} H , ~H^{T} \Lambda_B^{-1} H, \dots, H^{T} \Lambda_B^{-1} H \}~\dot{=}~\diag\{\tilde{A}_0^{11},\tilde{A}_0^{22} \}
\end{aligned}
\end{equation}
with $\tilde{A}_0^{11} = H^{T} \Lambda_1^{-1} H $. 
Note that $A_0$ is positive definite according to the definitions of $\Lambda_l, l=1,2,\ldots,N$ and $\Lambda_B$ defined in \eqref{equ:dk-3-2} and Assumptions \ref{B2} and \ref{C1}. Then we have $A_0 A_j = P^{-T} \tilde{A}_0 \tilde{A}_j P^{-1}$, which is symmetric since so is $\tilde{A}_0 \tilde{A}_j$. Thus Condition \ref{cond:2-ssc-3} is satisfied.
Furthermore, it follows from \eqref{equ:dk-9} and the expressions of $A_0$ and $P^{-1}$ that 
\begin{equation}\nonumber
    \begin{aligned}
       A_0 \partial_U Q = {}& -P^{-T} \tilde{A}_0 P^{-1} \diag\{0, 1, \ldots, 1\}
       ={}  -\begin{pmatrix}
           O_{K\times K} & O_{K\times K(N+N'-1)} \\
           O_{K(N+N'-1) \times K} & S
       \end{pmatrix}
    \end{aligned}
\end{equation}
with
\begin{equation}\nonumber
    S= \mathbf{1}_{K(N+N'-1)} \tilde{A}_0^{11} \mathbf{1}_{K(N+N'-1)}^T 
    + 
    \tilde{A}_0^{22},\quad \mathbf{1}_{K(N+N'-1)} = (I_K, \dots, I_K)^T\in \mathbb{R}^{K(N+N'-1)\times K}.
\end{equation}
%
Note that $\mathbf{1}_{K(N+N'-1)} \tilde{A}_0^{11} \mathbf{1}_{K(N+N'-1)}^T$ is non-negative definite and $\tilde{A}_0^{22}$ is positive definite, thus $S$ is positive definite too. Therefore, Condition \ref{cond:2-ssc-4} holds and the validity of the diffusive kinetic model \eqref{equ:dk-2} for the parabolic system \eqref{equ:dk-1} is justified.

\section{A new relaxation model}
\label{sec4}

Note that the systems approximated by relaxation models in the previous section are all assumed to be strictly parabolic. In this section, we  construct a relaxation model for the following general hyperbolic-parabolic system:
\begin{equation}\label{11}
\partial_t u + \sum_{j=1}^d a_j\partial_{x_j} u
= \sum_{j=1}^d \partial_{x_j}( \sum_{k=1}^d D_{jk}\partial_{x_k} u),
\end{equation}
where $u=(u_1, u_2)^T\in \mathbb R^m$, $u_1 \in \mathbb R^{m-s}, u_2 \in \mathbb R^{s}$, and
\begin{equation}\nonumber
\begin{aligned}
& a_j = a_j(u)=
\begin{pmatrix}
   a_j^{11} & a_j^{12}\\[2mm]
   a_j^{21} & a_j^{22} 
\end{pmatrix}, 
\quad a_j^{11}=a_j^{11}(u) \in \mathbb R^{(m-s) \times (m-s)}, \quad a_j^{22}=a_j^{22}(u) \in \mathbb R^{s \times s}, \\
& D_{jk} = D_{jk}(u)=
\begin{pmatrix}
O_{(m-s)\times (m-s)} & O_{(m-s)\times s} \\[2mm]
D_{jk}^{21} & D_{jk}^{22}
\end{pmatrix}, D_{jk}^{21}=D_{jk}^{21}(u) \in \mathbb R^{s \times (m-s)}, \quad  D_{jk}^{22}=D_{jk}^{22}(u) \in \mathbb R^{s \times s}.
\end{aligned}
\end{equation}
We assume system \eqref{11} is endowed with a symmetrizer $a_0 = a_0(u)$ such that
\begin{enumerate}[label=(\Roman*)]

\item \label{R1} $a_0 a_j$ is symmetric for all $j=1,2,...,d$.

\item \label{R2} The $md \times md$ matrix
\begin{equation}\label{z3}
\begin{pmatrix}
    a_0D_{11} & a_0D_{12} & ... & a_0D_{1d}\\
    a_0D_{21} & a_0D_{22} & ... & a_0D_{2d}\\
    \vdots & \vdots & ... & \vdots\\
    a_0D_{d1} & a_0D_{d2} & ... & a_0D_{dd}
\end{pmatrix}
\end{equation}
is symmetric positive semidefinite.
\end{enumerate}
Additionally, we further assume
\begin{enumerate}[label=(III)]
\item \label{R3} The $sd \times sd$ matrix
\begin{equation}\label{14}
H=
\begin{pmatrix}
    D_{11}^{22} & D_{12}^{22} & ... & D_{1d}^{22}\\
    D_{21}^{22} & D_{22}^{22} & ... & D_{2d}^{22}\\
    \vdots & \vdots & ... & \vdots\\
    D_{d1}^{22} & D_{d2}^{22} & ... & D_{dd}^{22}
\end{pmatrix}
\end{equation}
is invertible.
\end{enumerate}

Here Assumptions \ref{R1}-\ref{R3} guarantee the existence of solutions to hyperbolic-parabolic systems of the form \eqref{11} and are admitted by the compressible Navier-Stokes equations \cite{kawashima1984systems,kawashima1988}. Particularly, Assumption \ref{R3} means that the parabolic part of the system is strictly parabolic, \ie, $\diag\{a_0,a_0,...,a_0\} H$ is positive definite according to Assumptions \ref{R2} and \ref{R3}.

We construct the following relaxation system for \eqref{11}:
\begin{subequations}\label{15}
\begin{align}
& \partial_t u_1+ \sum_{j=1}^d a_j^{11}\partial_{x_j} u_1+ \sum_{j=1}^d a_j^{12}\partial_{x_j} u_2=0, \\
& \partial_t u_2+ \sum_{j=1}^d a_j^{21}\partial_{x_j} u_1+ \sum_{j=1}^d a_j^{22}\partial_{x_j} u_2
  + \frac{1}{\varepsilon} \sum_{j=1}^d \partial_{x_j} w_j  = 0, \\
& \partial_t w_j  + \frac{1}{\varepsilon}(\sum_{k=1}^d D_{jk}^{21}\partial_{x_k} u_1+ \sum_{k=1}^d D_{jk}^{22}\partial_{x_k} u_2) = - \frac{1}{\varepsilon^2} w_j ,\quad j=1,2,...,d,
\end{align}
\end{subequations}
where $w_j \in \mathbb R^{s}$ is an approximation of $-\varepsilon (\sum_{k=1}^d D_{jk}^{21}\partial_{x_k} u_1 + \sum_{k=1}^d D_{jk}^{22}\partial_{x_k} u_2)$. The system can be rewritten as
\begin{equation}\label{16}
\partial_t U
+ \sum_{j=1}^d \bar A_j\partial_{x_j} U + \frac{1}{\varepsilon}\sum_{j=1}^d \hat A_j\partial_{x_j} U
= \frac{1}{\varepsilon^2}Q,
\end{equation}
where $U = (u_1, u_2, w_1, w_2, \ldots, w_d)^T \in \mathbb R^{m+sd}$,  $Q=Q(U)=\diag\{0,-w_1,-w_2, \ldots,-w_d\}$ and
\begin{equation}
\begin{split}
& \bar A_j = \bar A_j(U)=
\left(
\begin{array}{lll}
a_j^{11} & a_j^{12} & O_{(m-s)\times ds}\\[2mm]
a_j^{21} & a_j^{22} & O_{s \times ds}\\[2mm]
O_{ds \times (m-s)} & O_{ds \times s} & O_{ds \times ds}
\end{array}
\right), \\[2mm]
& \hat A_j = \hat A_j(U)=
\left(
\begin{array}{lllll}
O_{(m-s)\times (m-s)} & O_{(m-s)\times s} & O_{(m-s)\times s} & ... & O_{(m-s)\times s}\\[2mm]
O_{s \times (m-s)} & O_{s \times s} & \delta_{j1}I_{s} & ... & \delta_{jd}I_{s}\\[2mm]
D_{1j}^{21} & D_{1j}^{22} & O_{s\times s} & ... & O_{s\times s}\\
\vdots & \vdots & \vdots &  ... & \vdots \\
D_{dj}^{21} & D_{dj}^{22} & O_{s\times s} & ... & O_{s\times s}
\end{array}
\right).
\end{split}
\end{equation}

Let
\begin{equation*}
a_0 =
\left(
\begin{array}{cc}
a_0^{11} & a_0^{12} \\[2mm]
a_{0}^{21} & a_{0}^{22}
\end{array}
\right).
\end{equation*}
According to Assumption \ref{R3}, $H$ in \eqref{14} is invertible and thus we define
\begin{equation}\label{z8}
B=\diag\{a_{0}^{22}, a_{0}^{22}, \ldots, a_{0}^{22}\} H^{-1}
\end{equation}
and
\begin{equation}\nonumber
A_0 = \diag\{a_{0}, ~B\}.
\end{equation}
Then on the system \eqref{16}, we have

\begin{theorem}\label{thm11}
Under Assumptions \ref{R1}-\ref{R3} for the hyperbolic parabolic system \eqref{11}, the relaxation model \eqref{16} is symmetrizable hyperbolic in the sense that
\begin{enumerate}[label=(\alph*)]
    \item \label{a} The matrices $B$ and $A_0$ are symmetric positive definite.
    \item \label{b} The matrices $A_0 \bar A_j$ and $A_0 \hat A_j$ are symmetric for all $j=1,2,...,d$.
    \item \label{c} The matrix $A_0 Q_{U} = \diag\{0,  -B\}$ is symmetric negative semidefinite.
\end{enumerate}
\end{theorem}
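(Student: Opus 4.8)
The plan is to verify the three assertions \ref{a}, \ref{b}, \ref{c} in order, exploiting the block structure of $A_0 = \diag\{a_0, B\}$ and the corresponding $2\times 2$ block structure of $\bar A_j$ and $\hat A_j$. For \ref{a}, I would first note that $a_0$ is symmetric positive definite by hypothesis (it is a symmetrizer of \eqref{11}). For $B = \diag\{a_0^{22},\dots,a_0^{22}\}H^{-1}$, the key observation is that $a_0^{22}$ is itself symmetric positive definite: it is the lower-right block of the symmetric positive definite matrix $a_0$, hence a principal submatrix, hence symmetric positive definite. Then, following the same device used in Section \ref{sec3.2}, I would show $B$ is symmetric positive definite by checking that its inverse-type congruence $H^T B H = H^T \diag\{a_0^{22},\dots,a_0^{22}\}$ equals $\brac{\diag\{a_0^{22},\dots,a_0^{22}\}H}^T$, which by Assumption \ref{R2} together with \ref{R3} is symmetric positive definite (this is exactly the ``strictly parabolic'' remark right before the theorem). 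Since $H$ is invertible, $B = H^{-1}\brac{\text{spd matrix}}H^{-T}$ is a congruence of a symmetric positive definite matrix, hence symmetric positive definite; and $A_0$, being block-diagonal with two symmetric positive definite blocks, is symmetric positive definite.

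For \ref{b}, the matrix $A_0 \bar A_j$ is block-diagonal-times-block-structured: its only nonzero block is $a_0 \begin{pmatrix} a_j^{11} & a_j^{12}\\ a_j^{21} & a_j^{22}\end{pmatrix} = a_0 a_j$ sitting in the top-left $m\times m$ corner, which is symmetric by Assumption \ref{R1}; the rest is zero, so $A_0\bar A_j$ is symmetric. For $A_0 \hat A_j$ I would write out the product blockwise. The off-diagonal coupling in $\hat A_j$ is between the $u_2$-block and the $w$-block: the $(u_2, w_k)$ entry is $\delta_{jk} I_s$ and the $(w_k, u_2)$ entry is $D_{kj}^{22}$ (with a $(w_k,u_1)$ entry $D_{kj}^{21}$). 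After multiplying on the left by $A_0 = \diag\{a_0, B\}$, the $(u_2, w_k)$ block becomes (a sub-block of) $a_0^{22}$ hit with $\delta_{jk}$, i.e. $\delta_{jk} a_0^{22}$ in the relevant slot, while the $(w_k, u_2)$ block becomes the corresponding block of $B \cdot (\text{column of } D^{22})$. The point is that $B$ was defined precisely so that $B H = \diag\{a_0^{22},\dots,a_0^{22}\}$, whence the block $(w_k, u_2)$ of $A_0\hat A_j$ is $a_0^{22}\delta_{kj}$ — matching the transpose of the $(u_2, w_k)$ block. One also checks the $(w_k, u_1)$ blocks: after left-multiplication by $B$ these assemble into $\diag\{a_0^{22},\dots\}H^{-1}(\text{column of }D^{21})$, and since the $u_1$-columns of $\hat A_j$ on the $u_2$-row are zero, symmetry forces these blocks to vanish in the symmetric part — but in fact they need not vanish; rather I must check they are consistent, which requires that the relevant cross-blocks $a_0^{21}$ contributions cancel. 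The cleanest route is: group $\hat A_j$ as $\begin{pmatrix} 0 & E_j \\ \hat D_j & 0\end{pmatrix}$ where $E_j$ picks out $\delta_{jk}I_s$ into the $u_2$-row and $\hat D_j$ is the stacked $(D_{kj}^{21}, D_{kj}^{22})$, then verify $a_0 E_j = (B\hat D_j)^T$ directly from $BH = \diag\{a_0^{22},\dots\}$ and from the fact that the $u_1$-rows of $E_j$ are zero (so only $a_0^{22}$, not $a_0^{21}$ or $a_0^{12}$, enters). Assertion \ref{c} is immediate: $Q_U = \diag\{0, -I_{sd}\}$ (up to the block sizes), so $A_0 Q_U = \diag\{0, -B\}$, which is symmetric negative semidefinite by part \ref{a}.

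I expect the main obstacle to be bookkeeping in part \ref{b} for $A_0\hat A_j$: one must be careful that left-multiplication by the \emph{full} $a_0$ (including its off-diagonal blocks $a_0^{12}, a_0^{21}$) on the $u_1,u_2$ rows of $\hat A_j$ does not spoil symmetry, and this works only because the $u_1$-row of $\hat A_j$ is identically zero and the $w$-rows of $\hat A_j$ live in a block on which $A_0$ acts through $B$ alone. Writing $\hat A_j$ in the compact $2\times 2$ form above and checking the single identity $a_0^{22}\,[\text{selector for }j] = \brac{B\,[\text{stacked }D^{22}_{\cdot j}]}^T$ — which is just the definition \eqref{z8} unwound — reduces everything to Assumption \ref{R3} and the symmetry of $a_0^{22}$. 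Once that identity is in hand, symmetry of $A_0\hat A_j$ follows and the theorem is proved.
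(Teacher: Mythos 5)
Your parts \ref{a} and \ref{c} match the paper's argument: the congruence $H^T B H = H^T\diag\{a_0^{22},\ldots,a_0^{22}\}$ together with Assumptions \ref{R2}--\ref{R3} gives positive definiteness of $B$, and \ref{c} is immediate from \ref{a}. The gap is in part \ref{b}, in the symmetry of $A_0\hat A_j$. You correctly reduce it to the block identity $a_0E_j=(B\hat D_j)^T$, but then claim that because the $u_1$-rows of $E_j$ vanish, ``only $a_0^{22}$, not $a_0^{21}$ or $a_0^{12}$, enters,'' so that everything reduces to the single identity $BH=\diag\{a_0^{22},\ldots,a_0^{22}\}$, i.e.\ the definition \eqref{z8}. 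This is false: the $(u_1,w_k)$ block of $A_0\hat A_j$ equals $\delta_{jk}a_0^{12}$, because the off-diagonal block $a_0^{12}$ of $a_0$ multiplies the \emph{nonzero} $u_2$-row of $E_j$; it does not vanish in general. Symmetry therefore also requires
\begin{equation*}
B\begin{pmatrix}D_{1j}^{21}\\ \vdots \\ D_{dj}^{21}\end{pmatrix}
=\begin{pmatrix}\delta_{j1}(a_0^{12})^T\\ \vdots \\ \delta_{jd}(a_0^{12})^T\end{pmatrix},
\end{equation*}
which is \eqref{z10a} in the paper. This identity ties $D^{21}$ to $a_0^{12}$ and cannot follow from the definition of $B$, which only relates $B$ to $D^{22}$ and $a_0^{22}$.

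This missing identity is where the paper spends the second half of its proof of \ref{b}: from the symmetry of the matrix in \eqref{z3} (Assumption \ref{R2}) it extracts the block relation $(a_0^{22}D_{ij}^{21})^T=a_0^{12}D_{ji}^{22}$, substitutes it into the product $[\text{stacked }D^{21}]^T B\,[\text{stacked }D^{22}]$, and then cancels the invertible factor $H$ (Assumption \ref{R3}) to obtain \eqref{z10a}. Your plan never invokes Assumption \ref{R2} in part \ref{b} at all, so as written it cannot close except in the special case $a_0^{12}=0$, $D^{21}=0$, which the theorem does not assume. Adding this step would complete the proof along essentially the paper's lines.
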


\begin{proof}
We first prove \ref{a}.  Since $H^T B H = H^T \diag\{a_{0}^{22}, a_{0}^{22}, \ldots, a_{0}^{22}\}$ and $a_{0}^{22}$ is symmetric positive definite, we turn to show the symmetry and positive definiteness of $\diag\{a_{0}^{22}, a_{0}^{22}, \ldots, a_{0}^{22}\} H$. Noticing that $\diag\{a_{0}^{22}, a_{0}^{22}, \ldots, a_{0}^{22}\} H$ is a $md \times md$ sub-matrix of the matrix in \eqref{z3}, thus it is symmetric positive semidefinite. On the other hand, according to Assumption \ref{R3}, \ie, $H$ is invertible, the matrix $\diag\{a_{0}^{22}, a_{0}^{22}, \ldots, a_{0}^{22}\} H$ is invertible and thus is symmetric positive definite. Therefore, $B$ is symmetric positive definite and that of $A_0$ immediately follows.
Additionally, conclusion \ref{c} also follows from this.

Next we prove \ref{b}. The symmetry of $A_0 \bar A_j$ is obvious thanks to the symmetry of $a_0 a_j$ in Assumption \ref{R1}.
To show the symmetry of $A_0 \hat A_j$, we divide the matrix $B$ in the same manner as $H$ in \eqref{14} and denote the $(j,k)$-th sub-matrix as $B_{jk} \in \mathbb R^{s \times s}$. Then we directly compute
\begin{equation*}
\begin{split}
&A_0 \hat A_j = \diag\{A_{0}, B\} \hat A_j\\[3mm]
={}&
\setlength{\arraycolsep}{2pt}
\left(
\begin{array}{lllll}
a_0^{11} & a_0^{12} & O_{(m-s)\times s} & ... & O_{(m-s)\times s}\\[2mm]
a_{0}^{21} & a_{0}^{22} & O_{s\times s} & ... & O_{s\times s}\\[2mm]
O_{s\times (m-s)} & O_{s\times s} & B_{11} & ... & B_{1d}\\
\vdots & \vdots & \vdots &  ... & \vdots \\
O_{s\times (m-s)} & O_{s\times s} & B_{d1} & ... & B_{dd}
\end{array}
\right)
\left(
\begin{array}{lllll}
O_{(m-s)\times (m-s)} & O_{(m-s)\times s} & O_{(m-s)\times s} & ... & O_{(m-s)\times s}\\[2mm]
O_{s \times (m-s)} & O_{s \times s} & \delta_{j1}I_{s} & ... & \delta_{jd}I_{s}\\[2mm]
D_{1j}^{21} & D_{1j}^{22} & O_{s\times s} & ... & O_{s\times s}\\
\vdots & \vdots & \vdots &  ... & \vdots \\[2mm]
D_{dj}^{21} & D_{dj}^{22} & O_{s\times s} & ... & O_{s\times s}
\end{array}
\right)\\[3mm]
={}&
\left(
\begin{array}{lllll}
O_{(m-s)\times (m-s)} & O_{(m-s)\times s} & \delta_{j1}a_0^{12} & ... & \delta_{jd}a_0^{12}\\[2mm]
O_{s \times (m-s)} & O_{s \times s} & \delta_{j1}a_0^{22} & ... & \delta_{jd}a_0^{22}\\[2mm]
\sum_{k=1}^d B_{1k} D_{kj}^{21} & \sum_{k=1}^d B_{1k} D_{kj}^{22} & O_{s\times s} & ... & O_{s\times s}\\
\vdots & \vdots & \vdots &  ... & \vdots \\[2mm]
\sum_{k=1}^d B_{dk} D_{kj}^{21} & \sum_{k=1}^d B_{dk} D_{kj}^{22} & O_{s\times s} & ... & O_{s\times s}
\end{array}
\right).
\end{split}
\end{equation*}
The above matrix is symmetric if and only if
\begin{equation*}
B
\left(
\begin{array}{c}
D_{1j}^{21}\\
\vdots \\
D_{dj}^{21}
\end{array}
\right)
=
\left(
\begin{array}{c}
\delta_{j1}(a_0^{12})^T\\
\vdots \\
\delta_{jd}(a_0^{12})^T
\end{array}
\right), \quad
B
\left(
\begin{array}{c}
D_{1j}^{22}\\
\vdots \\
D_{dj}^{22}
\end{array}
\right)
=
\left(
\begin{array}{c}
\delta_{j1}(a_0^{22})^T\\
\vdots \\
\delta_{jd}(a_0^{22})^T
\end{array}
\right).
\end{equation*}
The above relations for all $j=1,2,...,d$ yield
\begin{subequations}
\begin{align}
& B
\left(
\begin{array}{ccc}
D_{11}^{21} & ... & D_{1d}^{21}\\
\vdots & \vdots & \vdots\\
D_{d1}^{21} & ... & D_{dd}^{21}
\end{array}
\right)
=
\diag\{(a_0^{12})^T, \ldots, (a_0^{12})^T\}, \label{z10a} \\[3mm]
& B
\left(
\begin{array}{ccc}
D_{11}^{22} & ... & D_{1d}^{22}\\
\vdots & \vdots & \vdots\\
D_{d1}^{22} & ... & D_{dd}^{22}
\end{array}
\right)
=
\diag\{(a_0^{22})^T, \ldots,  (a_0^{22})^T\}. \label{z10b}
\end{align}
\end{subequations}
Equation \eqref{z10b} obviously holds true according to the definition of $B$ in \eqref{z8}.
To show \eqref{z10a}, we deduce from \eqref{z10b} that
\begin{equation}\label{z11}
\begin{split}
&
\left(
\begin{array}{ccc}
D_{11}^{21} & \ldots & D_{1d}^{21}\\
\vdots & \vdots & \vdots\\
D_{d1}^{21} & \ldots & D_{dd}^{21}
\end{array}
\right)
^T
B
\left(
\begin{array}{ccc}
D_{11}^{22} & ... & D_{1d}^{22}\\
\vdots & \vdots & \vdots\\
D_{d1}^{22} & ... & D_{dd}^{22}
\end{array}
\right)
=
\left(
\begin{array}{ccc}
D_{11}^{21} & ... & D_{1d}^{21}\\
\vdots & \vdots & \vdots\\
D_{d1}^{21} & ... & D_{dd}^{21}
\end{array}
\right)
^T
\diag\{(a_0^{22})^T, \ldots, (a_0^{22})^T\}\\[3mm]
={}&
\left(
\begin{array}{ccc}
(a_0^{22}D_{11}^{21})^T & ... & (a_0^{22}D_{d1}^{21})^T\\
\vdots & \vdots & \vdots\\
(a_0^{22}D_{1d}^{21})^T & ... & (a_0^{22}D_{dd}^{21})^T
\end{array}
\right).
\end{split}
\end{equation}
On the other hand, according to Assumption \ref{R2}, \ie, the matrix in \eqref{z3} is symmetric, we have
\begin{equation*}
(a_0 D_{ij})^T = a_0D_{ji}.
\end{equation*}
That is
\begin{equation*}
\begin{split}
& \left(
\begin{array}{cc}
O_{(m-s)\times (m-s)} & O_{(m-s)\times s} \\[2mm]
D_{ij}^{21} & D_{ij}^{22}
\end{array}
\right)^T
\left(
\begin{array}{cc}
a_0^{11} & a_0^{12} \\[2mm]
a_{0}^{21} & a_{0}^{22}
\end{array}
\right)^T
=
\left(
\begin{array}{cc}
(D_{ij}^{21})^T (a_0^{12})^T & (D_{ij}^{21})^T (a_0^{22})^T \\[2mm]
(D_{ij}^{22})^T (a_0^{12})^T & (D_{ij}^{22})^T (a_0^{22})^T
\end{array}
\right)\\[2mm]
=
&
\left(
\begin{array}{cc}
a_0^{11} & a_0^{12} \\[2mm]
a_{0}^{21} & a_{0}^{22}
\end{array}
\right)
\left(
\begin{array}{cc}
O_{(m-s)\times (m-s)} & O_{(m-s)\times s} \\[2mm]
D_{ji}^{21} & D_{ji}^{22}
\end{array}
\right)
=
\left(
\begin{array}{cc}
a_0^{12}D_{ji}^{21} & a_0^{12}D_{ji}^{22}\\[2mm]
a_0^{22}D_{ji}^{21} & a_0^{22}D_{ji}^{22}
\end{array}
\right),
\end{split}
\end{equation*}
from which we have
\begin{equation*}
(D_{ij}^{21})^T (a_0^{22})^T = ( a_0^{22} D_{ij}^{21} )^T = a_0^{12}D_{ji}^{22}.
\end{equation*}
Substituting the above equations for all $i,j=1,2,...,d$ into \eqref{z11} gives
\begin{equation*}
\begin{split}
&\left(
\begin{array}{ccc}
D_{11}^{21} & ... & D_{1d}^{21}\\
\vdots & \vdots & \vdots\\
D_{d1}^{21} & ... & D_{dd}^{21}
\end{array}
\right)^T
B
\left(
\begin{array}{ccc}
D_{11}^{22} & ... & D_{1d}^{22}\\
\vdots & \vdots & \vdots\\
D_{d1}^{22} & ... & D_{dd}^{22}
\end{array}
\right) 
={}
\left(
\begin{array}{ccc}
(a_0^{22}D_{11}^{21})^T & ... & (a_0^{22}D_{d1}^{21})^T\\
\vdots & \vdots & \vdots\\
(a_0^{22}D_{1d}^{21})^T & ... & (a_0^{22}D_{dd}^{21})^T
\end{array}
\right)\\[3mm]
={}&
\left(
\begin{array}{ccc}
a_0^{12}D_{11}^{22} & ... & a_0^{12}D_{1d}^{22}\\
\vdots & \vdots & \vdots\\
a_0^{12}D_{d1}^{22} & ... & a_0^{12}D_{dd}^{22}
\end{array}
\right)
={}
\diag\{a_0^{12}, ..., a_0^{12}\}
\left(
\begin{array}{ccc}
D_{11}^{22} & ... & D_{1d}^{22}\\
\vdots & \vdots & \vdots\\
D_{d1}^{22} & ... & D_{dd}^{22}
\end{array}
\right).
\end{split}
\end{equation*}
Thanks to the invertibility of $H$ in Assumption \ref{R3}, we deduce from the above equation that
\begin{equation*}
\begin{split}
&
\left(
\begin{array}{ccc}
D_{11}^{21} & ... & D_{1d}^{21}\\
\vdots & \vdots & \vdots\\
D_{d1}^{21} & ... & D_{dd}^{21}
\end{array}
\right)
^T
B
=
\diag\{a_0^{12}, \ldots, a_0^{12}\}.
\end{split}
\end{equation*}
Then \eqref{z10a} follows from the transpose of the above equation. This completes the proof.

\end{proof}

\begin{theorem}\label{thm42}
Under Assumptions \ref{R1}-\ref{R3} for the hyperbolic-parabolic system \eqref{11}, the relaxation model \eqref{16} satisfies the convergence criteria \ref{cond:2-ssc-1}-\ref{cond:2-ssc-5}.
\end{theorem}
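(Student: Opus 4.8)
The plan is to bring the relaxation system \eqref{16} into the canonical form \eqref{equ:2-1-pde} and then check the five criteria one at a time, leaning on Theorem \ref{thm11} for the structural ones. Comparing \eqref{16} with \eqref{equ:2-1-pde}, the convective matrix in the canonical normalization is $A_j(U;\varepsilon)=\hat A_j(U)+\varepsilon\bar A_j(U)$, while the source is $Q(U;\varepsilon)=Q(U)=(0,-w_1,\dots,-w_d)^T$, and the natural block partition has $U^{I}=(u_1,u_2)^T\in\mathbb R^{m}$ and $U^{II}=(w_1,\dots,w_d)^T\in\mathbb R^{sd}$, so that $n=m+sd$ and $r=sd$. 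With $q(U;\varepsilon)=-(w_1,\dots,w_d)^T$, which is smooth and $\varepsilon$-independent, Condition \ref{cond:2-ssc-1} is immediate; and since $q(u,w;0)=-w$ vanishes precisely when $w=0$ and $\partial_w q(u,0;0)=-I_{sd}$ is invertible, Condition \ref{cond:2-ssc-2} holds as well.

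For hyperbolicity and dissipativity I would use $A_0=\diag\{a_0,B\}$ as defined just before Theorem \ref{thm11}. Part \ref{a} of that theorem gives that $A_0$ is symmetric positive definite, and part \ref{b} gives that both $A_0\bar A_j$ and $A_0\hat A_j$ are symmetric; hence $A_0A_j=\varepsilon A_0\bar A_j+A_0\hat A_j$ is symmetric for every $j$, which is Condition \ref{cond:2-ssc-3}. Since $\partial_U Q$ is the constant matrix $\diag\{0_{m\times m},-I_{sd}\}$, part \ref{c} of Theorem \ref{thm11} yields
\[
A_0(u,0;0)\,\partial_U Q(u,0;0)+\big(\partial_U Q(u,0;0)\big)^T A_0(u,0;0)=2\,\diag\{0,-B(u)\},
\]
and because $B(u)$ is symmetric positive definite and depends only on $u$, the choice $S(u)=2B(u)$ verifies Condition \ref{cond:2-ssc-4}.

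Condition \ref{cond:2-ssc-5} is the place where the $\varepsilon$-splitting of the convective part is essential: the top-left $m\times m$ block of $\hat A_j$ vanishes identically by the block structure of $\hat A_j$, whereas the top-left block of $\bar A_j$ is exactly $a_j(u)$, so $A_j^{11}(u,w;\varepsilon)=\varepsilon\,a_j(u)$; evaluating at $\varepsilon=0$ gives $A_j^{11}(u,0;0)=0$ and $\partial_u A_j^{11}(u,0;0)=0$. This establishes all of \ref{cond:2-ssc-1}--\ref{cond:2-ssc-5}. To complete the validation of \eqref{16} as an approximation of \eqref{11}, one should additionally check, in the spirit of Section \ref{sec3}, that the limit equation \eqref{4eq:u0-equ} coincides with the target system \eqref{11}: substituting $w_0=0$ and the $O(\varepsilon)$ expansion of the $w_j$ obtained from \eqref{15} back into the $u$-equations reproduces \eqref{11}, precisely because $B$ simultaneously satisfies the identities \eqref{z10a} and \eqref{z10b} established inside the proof of Theorem \ref{thm11}. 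The bulk of the work is therefore already contained in Theorem \ref{thm11}; the only genuinely delicate step is the careful bookkeeping of the $m$-versus-$sd$ block partition together with the split $A_j=\hat A_j+\varepsilon\bar A_j$, which is exactly what forces the compatibility Condition \ref{cond:2-ssc-5} to hold.
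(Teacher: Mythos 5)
Your proof is correct and follows essentially the same route as the paper: Conditions \ref{cond:2-ssc-1} and \ref{cond:2-ssc-2} are immediate from the form of $Q$, Conditions \ref{cond:2-ssc-3} and \ref{cond:2-ssc-4} are read off from parts \ref{a}--\ref{c} of Theorem \ref{thm11} with the same symmetrizer $A_0=\diag\{a_0,B\}$, and Condition \ref{cond:2-ssc-5} follows from the identification $A_j^{11}(U;\varepsilon)=\varepsilon a_j(u)$ coming from the split $A_j=\hat A_j+\varepsilon\bar A_j$. Your additional remarks (the explicit choice $S(u)=2B(u)$ and the check that the limit equation recovers \eqref{11}) only make the argument slightly more detailed than the paper's.
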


\begin{proof}
Note that Conditions \ref{cond:2-ssc-1} and \ref{cond:2-ssc-2} are naturally true.
Conditions \ref{cond:2-ssc-3} and \ref{cond:2-ssc-4} follow from the conclusions \ref{b} and \ref{c} in Theorem \ref{thm11}, respectively.
On the other hand, the relaxation model \eqref{16} can be written in the form of  \eqref{equ:2-1-pde} with
\begin{equation}
\begin{aligned}\nonumber
    A_j(U;\varepsilon)
     =\varepsilon\bar A_j + \hat A_j 
    = \begin{pmatrix}
        \varepsilon a_j^{11} & \varepsilon a_j^{12} & O_{(m-s)\times s} & \ldots & O_{(m-s)\times s}\\[2mm]
        \varepsilon a_j^{21} & \varepsilon a_j^{22} & \delta_{j1}I_{s} & \ldots & \delta_{jd}I_{s}\\[2mm]
        D_{1j}^{21} & D_{1j}^{22} & O_{s\times s} & \ldots & O_{s\times s}\\
        \vdots & \vdots & \vdots & \ldots & \vdots \\[2mm]
        D_{dj}^{21} & D_{dj}^{22} & O_{s\times s} & \ldots & O_{s\times s}
    \end{pmatrix}, \quad j=1,\dots, d.
\end{aligned}
\end{equation}
With the same partition as that in \eqref{equ:2-1-pde}, we have 
\begin{equation}\nonumber
  A_j^{11}(U;\varepsilon) = \begin{pmatrix}
      \varepsilon a_i^{11} & \varepsilon a_i^{12}\\[2mm]
      \varepsilon a_i^{21} & \varepsilon a_i^{22}
  \end{pmatrix},  
\end{equation}
which leads to the satisfaction of Condition \ref{cond:2-ssc-5}. Thus we have verified all the convergence criteria.
\end{proof}

\begin{remark}
Since Assumptions \ref{R1}-\ref{R3} are admitted by the compressible Navier-Stokes equations \cite{kawashima1984systems,kawashima1988}, one can obtain an effective relaxation model as \eqref{15} and then use the numerical schemes for hyperbolic systems to solve it efficiently. On the other hand, if we adopt a degenerate diffusion term as in \eqref{11} for systems \eqref{equ:examp-312-limit} and \eqref{equ:dk-1}, the validity of corresponding relaxation models may be justified as above. These are left for our future work.
\end{remark}

\section{Conclusions and remarks}
\label{sec5}

In this work, we systematically investigated the validity of several representative relaxation models that arise from numerical schemes for hyperbolic-parabolic systems. These models, formulated as hyperbolic systems with stiff source terms involving a small parameter, are ubiquitous in multiscale modeling and designing numerical schemes. 
By verifying the convergence criteria in \cite{yong1999singular,lattanzio2001hyperbolic,peng2025convergence} for general hyperbolic relaxation systems, we justified the validity of five representative relaxation models, including both direct relaxation models and kinetic approximation models.
Namely, with these criteria satisfied, solutions of the relaxation models converge to those of the target hyperbolic-parabolic systems in the relaxation limit. This provides a solid mathematical foundation for the consistency and convergence of the numerical schemes based on such relaxation models.
Moreover, we proposed a new relaxation model for general multi-dimensional hyperbolic-parabolic systems.  By some mild assumptions on the system, we show that the proposed relaxation model satisfies the convergence criteria and thus its approximation validity is guaranteed. Unlike the existing relaxation models for special hyperbolic-parabolic systems, our new relaxation model is valid for general systems. We believe that the new model is promising for broader applications in multiscale modeling and developing efficient numerical schemes for complex dissipative systems.



\end{document}